\documentclass[11pt, letterpaper]{amsart}
\usepackage{graphicx, amssymb, color, hyperref}

\vfuzz2pt 
\hfuzz2pt 

\addtolength{\hoffset}{-1.9cm} \addtolength{\textwidth}{3.8cm}
\addtolength{\voffset}{-0.7cm}
\addtolength{\textheight}{1.4cm}


\vfuzz2pt 
\hfuzz2pt 
\newtheorem{thm}{Theorem}[section]

\newtheorem{lem}[thm]{Lemma}
\newtheorem{prop}[thm]{Proposition}

\theoremstyle{definition}
\newtheorem{defn}[thm]{Definition}

\theoremstyle{remark}
\newtheorem{rem}[thm]{Remark}

\numberwithin{equation}{section}

\newcommand{\abs}[1]{\left\vert#1\right\vert}
\newcommand{\set}[1]{\left\{#1\right\}}
\newcommand{\Real}{\mathbb R}

\newcommand{\Natural}{\mathbb N}
\newcommand{\nin}{n \in \Natural}
\newcommand{\kin}{k \in \Natural}

\newcommand{\such}{{\ | \ }}
\newcommand{\limn}{\lim_{n \to \infty}}

\newcommand{\limk}{\lim_{k \to \infty}}
\newcommand{\dfn}{\, := \,}


\newcommand{\prob}{\mathbb{P}}

\newcommand{\conv}{\mathsc{conv}}

\newcommand{\qprob}{\mathbb{Q}}
\newcommand{\expec}{\mathbb{E}}
\newcommand{\expecp}{\expec_\prob}
\newcommand{\expecq}{\expec_\qprob}

\newcommand{\Lb}{\mathbb{L}}
\newcommand{\lz}{\Lb^0}

\newcommand{\lzp}{\lz_{+}}

\newcommand{\M}{\mathbb{M}}
\newcommand{\Mp}{\M^0_+}

\newcommand{\F}{\mathcal{F}}

\newcommand{\ud}{\mathrm d}

\newcommand{\inner}[2]{\left \langle #1 , \, #2 \right \rangle}


\newcommand{\num}{num\'eraire}
\newcommand{\X}{\mathcal{X}}
\newcommand{\oX}{\overline{\X}}

\newcommand{\oO}{\overline{\mathcal{O}}}
\newcommand{\On}{\mathcal{O}^{\X}_n}
\newcommand{\oOn}{\overline{\mathcal{O}^{\X}_n}}
\newcommand{\Oy}{\mathcal{O}^{\X}_Y}
\newcommand{\Ony}{\mathcal{O}^{\X}_{nY}}

\newcommand{\soco}{\mathsf{sc}}

\newcommand{\So}{\mathcal{S}}

\newcommand{\oS}{\overline{\mathcal{S}}}

\newcommand{\eL}{\mathcal{P}}
\newcommand{\oL}{\overline{\eL}}

\newcommand{\pare}[1]{\left(#1\right)}
\newcommand{\bra}[1]{\left[#1\right]}
\newcommand{\dbra}[1]{[\kern-0.15em[ #1 ]\kern-0.15em]}
\newcommand{\dbraco}[1]{[\kern-0.15em[ #1 [\kern-0.15em[}
\newcommand{\dbraoc}[1]{]\kern-0.15em] #1 ]\kern-0.15em]}
\newcommand{\C}{\mathcal{C}}
\newcommand{\K}{\mathcal{O}}
\newcommand{\Ko}{\mathcal{K}}

\newcommand{\D}{\mathcal{D}}

\newcommand{\indic}{\mathbb{I}}

\newcommand{\absco}{{<\kern-0.53em<}}

\newcommand{\oC}{\overline{\C}}
\newcommand{\oK}{\overline{\K}}

\newcommand{\oD}{\overline{\D}}

\renewcommand{\conv}{\mathsf{conv}}

\begin{document}

\title[Uniform integrability and local convexity in $\lz$]{Uniform integrability and local convexity in $\lz$}%
\author{Constantinos Kardaras}%
\address{Constantinos Kardaras, Department of Statistics, London School of Economics and Political Science, 10 Houghton st, London, WC2A 2AE, UK.}%
\email{K.Kardaras@lse.ac.uk}%

\thanks{The author would like to thank Gordan {\v{Z}}itkovi{\'{c}} for fruitful discussions regarding this work.}%
\subjclass[2000]{46A16; 46E30; 60A10}
\keywords{Uniform integrability; local convexity; probability spaces}%

\date{\today}%
\begin{abstract}
Let $\lz$ be the vector space of all (equivalence classes of) real-valued random variables built over a probability space $(\Omega, \F, \prob)$, equipped with a metric topology compatible with convergence in probability. In this work, we provide a necessary and sufficient structural condition that a set $\X \subseteq \lz$ should satisfy in order to infer the existence of a probability $\qprob$ that is equivalent to $\prob$ and such that $\X$ is uniformly $\qprob$-integrable. Furthermore, we connect the previous essentially measure-free version of uniform integrability with local convexity of the $\lz$-topology when restricted on convex, solid and bounded subsets of $\lz$.
\end{abstract}

\maketitle


\section*{Introduction}

In the study of probability measure spaces, the notion of uniform integrability for sets of integrable measurable functions (random variables) has proved essential in both fields of Functional Analysis and Probability. On one hand, the Dunford-Pettis theorem \cite[Chapter IV.9]{MR1009162} states that uniform integrability of a set of random variables is equivalent to its relative weak sequential compactness in the corresponding $\Lb^1$ space; this fact allows the utilization of powerful functional-analytic techniques. On the other hand, uniform integrability is exactly the extra condition needed in order for convergence in (probability) measure to imply convergence in the $\Lb^1$-norm---see, for example, \cite[Proposition 4.12]{MR1876169}.

The latter fact mentioned above has a simple, yet important, corollary in the topological study of $\lz$ spaces, where $\lz$ is defined as the set of all (equivalence classes of) real-valued random variables built over a probability space $(\Omega, \F, \prob)$, equipped with a metric topology compatible with convergence in probability. To wit, when a uniformly $\prob$-integrable set $\X \subseteq \Lb^1 (\prob)$ is regarded as a subset in $\lz$, the $\lz$-topology and $\Lb^1(\prob)$-topology on $\X$ coincide; in particular, the $\lz$-topology on $\X$ is locally convex whenever $\X$ is uniformly $\prob$-integrable. Local convexity of the considered topology  is an important property from a functional-analytic viewpoint, as it enables the use of almost indispensable machinery, such as the Hahn-Banach Theorem and its consequences. Unfortunately, even though $\lz$ constitutes a very natural modelling environment (for example, it is the only of the $\Lb^p$ spaces for $p \in [0, \infty)$ that remains invariant with respect to equivalent changes of probability measure), the convex-analytic structure of $\lz$ is quite barren. Indeed, when the underlying probability space is non-atomic, the topological dual of $\lz$ contains only the zero functional; furthermore, unless the underlying  probability space is purely atomic, the $\lz$-topology fails to be locally convex---for the previous facts, see \cite[Theorem 13.41]{MR2378491}.

Despite the ``hostility'' of its topological environment, considerable research has been carried out in order to understand the convex-analytic properties of $\lz$---for a small representative list, see \cite{Kom67}, \cite{MR1768009}, \cite{MR1304434}, \cite{Zit09}, \cite{Kar10a}, \cite{MR2823052}. In the spirit of the discussion in the previous paragraphs, a novel use of uniform integrability bridging Functional Analysis and Probability was recently provided in \cite{KarZit12}. Consider an $\lz$-convergent sequence $(X_n)_{\nin}$ of random variables in $\lzp$ (the latter denoting the non-negative orthant of $\lz$), and define $\X$ as the $\lz$-closure of $\conv \pare{\set{X_n \such \nin}}$, where ``$\conv$'' is used to denote the convex hull of a subset in $\lz$. One of the main messages of \cite{KarZit12} is that the restriction of the $\lz$-topology on $\X$ is locally convex if and only if there exists \emph{some} probability $\qprob \sim \prob$ (where ``$\sim$'' is used to denote equivalence of probability measures) such that $\X$ is uniformly $\qprob$-integrable. Loosely speaking, the fact that the restriction of the $\lz$-topology on $\X$ is locally convex can be regarded as an essentially measure-free version of uniform integrability.

In the present work, the previous topic is explored in greater depth. The first main result of the paper provides a structural necessary and sufficient condition for an \emph{arbitrary} subset $\X$ of $\lz$ to be uniformly $\qprob$-integrable under some $\qprob \sim \prob$. To be more precise, for all $\nin$ define $\On$ as the subset of $\lzp$ consisting of all random variables dominated in the lattice structure of $\lz$ by some random variable of $\conv \pare{\set{(|X| - n)_+ \such X \in \X}}$, where $Z_+ = \max \set{Z, 0}$ for $Z \in \lz$; then, there is equivalence between the condition $\bigcap_{\nin} \On = \set{0}$ and existence of $\qprob \sim \prob$ such that $\X$ is uniformly $\qprob$-integrable. When $\X \subseteq \lzp$, the structural condition $\bigcap_{\nin} \On = \set{0}$ has a useful interpretation in the field of Financial Mathematics: its failure implies that there exists $Y \in \lzp \setminus \set{0}$ with the property that, for all $\nin$, there is the possibility of super-hedging $Y$ using convex combinations of call options to exchange positions in $\X$ for $n$ units of cash. The second main result of the paper explores further the connection between local convexity of the $\lz$-topology and the previous essentially measure-free version of uniform integrability. For $\X \subseteq \lzp$ that is convex, $\lz$-bounded and solid (in the sense that $X \in \X$ and $Y \in \lzp$ with $Y \leq X$ implies $Y \in \X$), it is established that the restriction of the $\lz$-topology on $\X$ is locally convex if and only if there exists $\qprob \sim \prob$ such that $\X$ is uniformly $\qprob$-integrable. The previous result sheds an important light on the topological structure of convex subsets of $\lz$, since it identifies the cases where the restriction of the $\lz$-topology is locally convex.

\smallskip

The structure of the paper is as follows. In Section \ref{sec: setup}, all the probabilistic and topological set-up is introduced. Section \ref{sec: UI} contains the first main result of the paper, Theorem \ref{thm: UI}, establishing a necessary and sufficient structural condition for a subset of $\lz$ to be uniformly integrable in an essentially measure-free way, as well as ramifications and discussion of Theorem \ref{thm: UI}. Section \ref{sec: local_conc} contains the second main result of the paper, Theorem \ref{thm: local conv}, connecting local convexity of subsets of $\lzp$ with their uniform $\qprob$-integrability under some $\qprob \sim \prob$; furthermore, discussion on the assumptions and conclusions of Theorem \ref{thm: local conv} is offered. Finally, Appendix \ref{sec: proof} contains the technical part of the proof of Theorem \ref{thm: UI}.

\section{Probabilistic Set-Up and Terminology}
\label{sec: setup}

Let $(\Omega, \F, \prob)$ be a probability space. For a probability $\qprob$ on $(\Omega, \F)$, we write $\qprob \ll \prob$ whenever $\qprob$ is absolutely continuous with respect to $\prob$ on $\F$; similarly, we write $\qprob \sim \prob$ whenever $\qprob$ and $\prob$ are equivalent on $\F$. All probabilities equivalent to $\prob$ have the same sets of zero measure, which shall be called \textsl{null}. Relationships between random variables are understood in the $\prob$-a.s. sense.

By $\lz$ we shall denote the set of all (equivalence classes modulo null sets of) real-valued random variables on $(\Omega, \F)$; furthermore, $\lzp$ will consist of all $X \in \lz$ such that $X \geq 0$. We follow the usual practice of not differentiating between a random variable and the equivalence class it generates in $\lz$. The expectation of $X \in \lzp$ under a probability $\qprob \ll \prob$ is denoted by $\expecq [X]$. We define a metric on $\lz$ via $\lz \times \lz \ni (X, Y) \mapsto \expecp \bra{ 1 \wedge \abs{X - Y} }$. The topology on $\lz$ that is induced by the previous metric depends on $\prob$ only through the null sets; convergence of sequences in this topology is simply convergence in measure under any probability $\qprob$ with $\qprob \sim \prob$. Unless otherwise explicitly stated, all topological concepts (convergence, closure, etc.) will be understood under the aforementioned metric topology on $\lz$.

A set $\X \subseteq \lz$ is called \textsl{convex} if $\pare{\alpha X + (1 -   \alpha) Z} \in \X$ whenever $X \in \X$, $Z \in \X$ and $\alpha \in [0,1]$. The set $\conv \X \subseteq \lz$ will denote the \textsl{convex hull} of $\X \in \lz$; namely, $\conv \X$ is the collection of all elements of the form $\sum_{i = 1}^k \alpha_i X_i$, where $k$ ranges in $\Natural$, $X_i \in \X$ and $\alpha_i \geq 0$ for all $i \in \set{1, \ldots, k}$, and $\sum_{i =1}^k \alpha_i = 1$. Furthermore, $\oX$ denotes the closure of $\X \in \lz$. The set $\X \subseteq \lz$ is called \textsl{bounded} if $\lim_{n \to \infty} \sup_{X \in \X} \prob[|X| > n] = 0$ holds; in this case, $\lim_{n \to \infty} \sup_{X \in \X} \qprob[|X| > n] = 0$ also holds for all probabilities $\qprob \ll \prob$. Note that boundedness in this sense coincides with boundedness in the sense of topological vector spaces---see \cite[Definition 5.36, page 186]{MR2378491}. For a probability $\qprob \ll \prob$, a set $\X \subseteq \lz$ is called \textsl{uniformly $\qprob$-integrable} if $\limn \pare{\sup_{X \in \X} \expecq \bra{ |X| \indic_{\set{|X| > n}} } }= 0$.

We now specialize to subsets of $\lzp$. The set $\X \subseteq \lzp$ is called \textsl{solid} if for all $Y \in \lzp$ and $X \in \X$ with $Y \leq X$, it follows that $Y \in \X$. The \textsl{solid hull} of $\X \subseteq \lzp$ is defined to be $\set{Y \in \lzp \such Y \leq X \text{ for some } X \in \X}$; clearly, it is the smallest solid subset of $\lzp$ that contains $\X$. The set  $\soco \X$ will denote the solid hull of the convex hull of $\X \subseteq \lzp$; in other words,
\[
\soco \X \dfn \set{Y \in \lzp \such Y \leq Z \text{ for some } Z \in \conv \X}.
\]
It is straightforward to check that the solid hull of a convex set is convex; therefore, $\soco \X$ is the smallest solid and convex set that includes $\X \subseteq \lzp$. In fact, the operations of taking the convex and solid hull of a subset of $\lzp$ commute. Indeed, the next result implies in particular that $\soco \X = \conv \set{Y \in \lzp \such Y \leq X \text{ for some } X \in \X}$; therefore, whenever $\X \subseteq \lzp$ is a solid set, $\soco \X = \conv \X$ holds.

\begin{prop} \label{prop: soco}
Let $\X \subseteq \lzp$ be a solid set. Then, $\conv \X$ is also solid.
\end{prop}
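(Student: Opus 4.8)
The plan is to show that every element dominated by a point of $\conv \X$ is itself a convex combination of elements dominated, coordinate by coordinate, by the generators of that point; solidity of $\X$ then places each coordinate back in $\X$. Concretely, fix $Z \in \conv \X$ and write $Z = \sum_{i=1}^k \alpha_i X_i$ with $X_i \in \X$, $\alpha_i > 0$ for each $i$ (discarding any zero-weight terms) and $\sum_{i=1}^k \alpha_i = 1$. Let $Y \in \lzp$ with $Y \leq Z$; the goal is to produce $Y_1, \ldots, Y_k \in \X$ with $Y = \sum_{i=1}^k \alpha_i Y_i$.

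The construction is to set, for each $i \in \set{1, \ldots, k}$,
\[
Y_i \dfn Y \, \frac{X_i}{Z} \, \indic_{\set{Z > 0}},
\]
with the convention that $Y_i = 0$ on the event $\set{Z = 0}$. First I would check that each $Y_i$ is a legitimate element of $\lzp$: on $\set{Z > 0}$ one has $Z \geq \alpha_i X_i$, hence $X_i / Z \leq 1 / \alpha_i < \infty$, so $0 \leq Y_i \leq (1/\alpha_i) Y < \infty$ almost surely. Next, since $Y \leq Z$, on $\set{Z > 0}$ we get $Y_i = Y (X_i / Z) \leq Z (X_i / Z) = X_i$, while on $\set{Z = 0}$ trivially $Y_i = 0 \leq X_i$; thus $Y_i \leq X_i$ everywhere, and solidity of $\X$ yields $Y_i \in \X$. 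Finally, on $\set{Z > 0}$ we compute $\sum_{i=1}^k \alpha_i Y_i = Y \, \big(\sum_{i=1}^k \alpha_i X_i\big) / Z = Y Z / Z = Y$, whereas on $\set{Z = 0}$ we have $0 \leq Y \leq Z = 0$, so $Y = 0 = \sum_{i=1}^k \alpha_i Y_i$. Hence $Y = \sum_{i=1}^k \alpha_i Y_i \in \conv \X$, proving that $\conv \X$ is solid.

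There is essentially no serious obstacle here; the only point requiring a little care is the division by $Z$, which is handled by splitting on the events $\set{Z > 0}$ and $\set{Z = 0}$ and by the finiteness estimate $X_i / Z \leq 1/\alpha_i$ noted above. I would also remark that the same computation, read in the other direction, shows that the solid hull of $\conv \X$ equals $\conv$ of the solid hull of $\X$, which is the commutation statement used in the paragraph preceding the proposition.
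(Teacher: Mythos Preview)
Your proof is correct and is actually more streamlined than the paper's. The paper argues by induction on the number $k$ of terms in the convex combination: it writes $\conv \X = \bigcup_k \C^k$ where $\C^0 = \X$ and $\C^k = \set{\alpha X + (1-\alpha)Y : X \in \X,\ Y \in \C^{k-1},\ \alpha \in [0,1]}$, and at the inductive step, given $Z \leq \alpha X + (1-\alpha)Y$, it builds $X' \leq X$ and $Y' \leq Y$ with $Z = \alpha X' + (1-\alpha)Y'$ via a three-case pointwise construction on the events $\set{Z \leq X \wedge Y}$, $\set{Y < Z \leq X}$, $\set{X < Z \leq Y}$. Your argument bypasses both the induction and the case analysis by rescaling all generators simultaneously by the single ratio $Y/Z$ on $\set{Z > 0}$; this yields the decomposition in one stroke and makes the role of solidity (namely, $Y_i \leq X_i \Rightarrow Y_i \in \X$) completely transparent. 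The paper's construction has the minor advantage of never dividing by a random variable, but your handling of $\set{Z = 0}$ and the bound $X_i/Z \leq 1/\alpha_i$ disposes of that concern cleanly.
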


\begin{proof}
Define a non-decreasing sequence $(\C^k)_{\kin}$ of subsets of $\lzp$ as follows: $\C^0 = \X$ and, inductively, for $\kin$, $\C^k \dfn \set{\alpha X + (1 - \alpha) Y \such X \in \X, \, Y \in \C^{k-1}, \, \alpha \in [0,1]}$. Note that $\conv \X = \bigcup_{\kin} \C^k$. In order to prove the solidity of $\conv \X$, it suffices to establish the solidity of $\C^k$ for each $\kin$. The latter will follow via an induction argument. Start by noting that $\C^0 = \X$ is solid by assumption. Now, fix $\kin$ and suppose that $\C^{k-1}$ is solid; we shall then show that $\C^k$ is also solid. Let $Z \in \lzp$ be such that $Z \leq \alpha X+ (1 - \alpha) Y$ for some $X \in \X$, $Y \in \C^{k-1}$ and $\alpha \in [0,1]$. We claim that there exist $X' \in \X$ and $Y' \in \C^{k-1}$ such that $Z = \alpha X' + (1 - \alpha) Y'$. To wit, if $\alpha = 0$ let $X' = 0$ and $Y' = Z$, while if $\alpha = 1$ set $X' = Z$ and $Y' = 0$. Assume then that $\alpha \in (0,1)$. Note that $\set{X < Z} \cap \set{Y < Z} = \emptyset$. Define
\begin{eqnarray*}
X' &=& Z \indic_{\set{Z \leq X \wedge Y}} + \frac{Z - (1 - \alpha) Y}{\alpha} \indic_{\set{Y < Z \leq X} } + X \indic_{\set{X < Z \leq Y}}, \\
Y' &=& Z \indic_{\set{Z \leq X \wedge Y}} + \frac{Z - \alpha X}{1 - \alpha} \indic_{\set{X < Z \leq Y}} + Y \indic_{\set{Y < Z \leq X}}.
\end{eqnarray*}
Since $Z \leq \alpha X + (1 - \alpha) Y$, it is straightforward to check that $X' \in \lzp$, $Y' \in \lzp$ and $X' \leq X$, $Y' \leq Y$. Therefore, by the induction hypothesis and the solidity of $\X$, $X' \in \X$ and $Y' \in \C^{k-1}$. Furthermore, by definition of $X'$ and $Y'$, the equality $Z = \alpha X' + (1 - \alpha) Y'$ follows in a straightforward way, completing the proof.
\end{proof}

Whenever $\X \subseteq \lzp$ is convex and solid, the set $\oX$ is again convex and solid. (For the latter solidity property, let $Z \in \oX$ and $Y \in \lzp$ with $Y \leq Z$. Assume that the $\X$-valued sequence $(Z_n)_{\nin}$ is such that $\limn Z_n = Z$. Then, $(Z_n \wedge Y)_{\nin}$ is still $\X$-valued since $\X$ is solid, and $\limn (Z_n \wedge Y) = Z \wedge Y = Y$, which implies that $Y \in \oX$. Therefore, $\oX$ is solid as well.) It follows that $\oX$ is the smallest convex, solid and closed subset of $\lzp$ that contains the convex and solid $\X \subseteq \lzp$.

\section{A Structural Condition for the Essentially Measure-Free Version of Uniform Integrability of Sets in $\lz$} \label{sec: UI}

\subsection{The first main result}

We begin with a simple result giving an equivalent formulation of uniform integrability that will tie better with Theorem \ref{thm: UI} (which immediately follows). Recall that $\Real \ni x \mapsto x_+ \in \Real_+$ denotes the operation returning the positive part of a real number.

\begin{prop} \label{prop: UI_opt}
Let $\qprob \ll \prob$ and $\X \subseteq \lzp$. Then, $\X$ is uniformly $\qprob$-integrable  if and only if $\limn \pare{ \sup_{X \in \X} \expecq \bra{ (X - n)_+ } } = 0$ holds.
\end{prop}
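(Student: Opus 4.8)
The plan is to derive each implication from an elementary pointwise inequality relating $(X - n)_+$ and the truncation $X \indic_{\set{X > n}}$, valid for every $X \in \lzp$ and every $\nin$; note that, since $\X \subseteq \lzp$, one has $|X| = X$, so the defining condition of uniform $\qprob$-integrability reads $\limn \pare{\sup_{X \in \X} \expecq \bra{X \indic_{\set{X > n}}}} = 0$. Throughout, all the quantities involved take values in $[0, \infty]$, so the manipulations with suprema and limits below are legitimate even before knowing that the elements of $\X$ are $\qprob$-integrable.

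For the ``only if'' direction I would simply observe the pointwise bound $(X - n)_+ \leq X \indic_{\set{X > n}}$: on $\set{X > n}$ the left-hand side equals $X - n \leq X$, and on $\set{X \leq n}$ both sides vanish. Taking $\qprob$-expectations and then the supremum over $X \in \X$ yields $\sup_{X \in \X} \expecq \bra{(X - n)_+} \leq \sup_{X \in \X} \expecq \bra{X \indic_{\set{X > n}}}$ for each $\nin$, so uniform $\qprob$-integrability of $\X$ immediately forces $\limn \pare{\sup_{X \in \X} \expecq \bra{(X - n)_+}} = 0$.

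For the ``if'' direction the key is the complementary bound $X \indic_{\set{X > 2n}} \leq 2 (X - n)_+$, again pointwise for every $X \in \lzp$ and $\nin$: on $\set{X > 2n}$ we have $n < X / 2$, hence $(X - n)_+ = X - n > X / 2$, so $2 (X - n)_+ > X$, while on $\set{X \leq 2n}$ the left-hand side is $0$. Hence $\sup_{X \in \X} \expecq \bra{X \indic_{\set{X > 2n}}} \leq 2 \sup_{X \in \X} \expecq \bra{(X - n)_+}$ for each $\nin$. Writing $a_n \dfn \sup_{X \in \X} \expecq \bra{X \indic_{\set{X > n}}}$ and noting that $(a_n)_{\nin}$ is non-increasing (because $X \indic_{\set{X > n}}$ is non-increasing in $n$), the hypothesis $\limn \pare{\sup_{X \in \X} \expecq \bra{(X - n)_+}} = 0$ gives $\limn a_{2n} = 0$, and monotonicity of $(a_n)_{\nin}$ then yields $\limn a_n = 0$, i.e.\ uniform $\qprob$-integrability of $\X$.

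There is essentially no obstacle here: the whole content lies in spotting the two inequalities, and the constant $2$ is not special --- for any $c > 1$ one has $X \indic_{\set{X > c n / (c - 1)}} \leq c (X - n)_+$, which works equally well. The only mild point to keep in mind is the switch from the subsequence $(a_{2n})_{\nin}$ back to the full sequence $(a_n)_{\nin}$, which is handled by the monotonicity just noted.
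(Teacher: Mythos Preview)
Your proof is correct. Both directions follow cleanly from the two pointwise inequalities you identified, and the passage from the subsequence $(a_{2n})_{\nin}$ to the full sequence via monotonicity is handled properly.

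Your argument differs from the paper's in a meaningful way. The paper actually establishes the stronger statement that the two limits $x \dfn \limn \sup_{X \in \X} \expecq\bra{(X-n)_+}$ and $y \dfn \limn \sup_{X \in \X} \expecq\bra{X \indic_{\set{X>n}}}$ are \emph{equal} (not merely that one vanishes if and only if the other does). To do this, after noting $x \leq y$ the paper treats the nontrivial case $y \in \Real_+$ by picking, for each $\nin$, an $X_n \in \X$ with $\expecq\bra{X_n \indic_{\set{X_n > n^2}}} \geq y(n-1)/n$ and then uses Markov's inequality to control $n \qprob\bra{X_n > n^2}$, obtaining $\expecq\bra{(X_n - n)_+} \geq y - (y + y_0)/n$ and hence $x \geq y$. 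Your approach, by contrast, uses the direct sandwich $X \indic_{\set{X > 2n}} \leq 2 (X-n)_+$, which is more elementary and entirely sufficient for the equivalence stated in the proposition, but would only give $y \leq 2x$ rather than $y = x$. For the purposes of the proposition (and its later use in the paper) your version is perfectly adequate and arguably cleaner; the paper's version buys the sharper numerical identity $x = y$, which is not actually needed downstream.
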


\begin{proof}
Define $x \dfn \limn \pare{ \sup_{X \in \X} \expecq \bra{ (X - n)_+ } }$ and $y \dfn \limn \pare{ \sup_{X \in \X} \expecq \bra{ X \indic_{\set{X > n}} } }$. Since $(X - n)_+ \leq X \indic_{\set{X > n}}$ holds for all $X \in \lzp$ and $\nin$, it follows that $x \leq y$. We shall actually show that $x = y$, which will imply the result of Proposition \ref{prop: UI_opt}. If $y = \infty$, then $\sup_{X \in \X} \expecq \bra{ X } = \infty$, which implies that $\sup_{X \in \X} \expecq \bra{ (X - n)_+ } = \infty$ holds for all $\nin$, so that $x = \infty$. If $y \in \Real_+$, note that $y_0 \dfn \sup_{X \in \X} \expecq \bra{ X } \in \Real_+$. For each $\nin$, pick $X_n \in \X$ such that $\expecq \bra{ X_n \indic_{\set{X_n > n^2}} }  \geq y (n-1) / n$ holds. Since Markov's inequality implies that $n \prob \bra{X_n > n^2} \leq \expec \bra{X_n} / n \leq y_0 / n$ holds for all $\nin$, we estimate
\[
\expecq \bra{ (X_n - n)_+ } \geq \expecq \bra{ (X_n - n)_+ \indic_{\set{X_n > n^2} } } =  \expecq \bra{ X_n \indic_{\set{X_n > n^2}} } - n \prob \bra{X_n > n^2} \geq y - \frac{y + y_0}{n},
\]
for all $\nin$. It follows that $x \geq \limsup_{n \to \infty} \expecq \bra{ (X_n - n)_+ } \geq y$, which combined with $x \leq y$ gives $x = y$. The above discussion shows that $x = y$ always holds, which concludes the proof.
\end{proof}

In preparation for the first main result of the paper, for $\X \subseteq \lzp$ define
\begin{equation} \label{eq: X_opt}
\On \dfn \soco  \set{(X - n)_+ \such X \in \X}, \quad \text{for all } \nin.
\end{equation}
It is straightforward to check that $\pare{\On}_{\nin}$ is a non-increasing sequence of subsets of $\lzp$. Note that the sequence $\pare{ \set{(X - n)_+ \such X \in \X} }_{\nin}$ of subsets of $\lzp$ will not necessarily have the same property, except when $\X$ is solid; in this case, $\set{(X - n)_+ \such X \in \X} \subseteq \lzp$ is also solid, and $\On = \conv  \set{(X - n)_+ \such X \in \X}$ holds in view of Proposition \ref{prop: soco}.

\begin{thm} \label{thm: UI}
Let $\X \subseteq \lzp$ and define the non-increasing sequence $(\On)_{\nin}$ of subsets of $\lzp$ as in \eqref{eq: X_opt}. Then, the following statements are equivalent:
\begin{enumerate}
	\item $\bigcap_{\nin} \On = \set{0}$.
	\item There exists a probability $\qprob \sim \prob$ such that $\X$ is uniformly $\qprob$-integrable.
\end{enumerate}
\end{thm}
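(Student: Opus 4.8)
The implication $(2) \Rightarrow (1)$ is the easy direction, and I would dispatch it first. Suppose $\qprob \sim \prob$ makes $\X$ uniformly $\qprob$-integrable. By Proposition \ref{prop: UI_opt} this means $\sup_{X \in \X} \expecq[(X-n)_+] \to 0$ as $n \to \infty$. Since expectation under $\qprob$ is linear and monotone, every element $Y \in \On = \soco\set{(X-n)_+ : X \in \X}$ satisfies $\expecq[Y] \le \sup_{X \in \X}\expecq[(X-n)_+]$ (convex combinations do not increase the sup, and passing to a smaller element in the solid hull only decreases the expectation). Hence any $Y \in \bigcap_n \On$ has $\expecq[Y] = 0$, and since $Y \ge 0$ and $\qprob \sim \prob$, this forces $Y = 0$. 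So $\bigcap_n \On = \set{0}$.

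The substance is $(1) \Rightarrow (2)$. The natural strategy is an exhaustion/separation argument producing the density $\ud\qprob/\ud\prob$ as a limit of contributions handling each ``level $n$'' tail. The key analytic tool should be a separation statement: if $\bigcap_n \On = \set{0}$, then for each $n$ one wants a strictly positive random variable $Z_n$ (a candidate partial density, or rather $\expecp[Z_n \,\cdot\,]$ a positive functional) that is ``small'' on $\On$; concretely, I would look to produce, for a suitably chosen summable sequence of weights, random variables $Z_n \in \lzpp$ with $\expecp[Z_n] \le 1$ and $\sup_{Y \in \On}\expecp[Z_n Y] \le \eps_n$ with $\eps_n \downarrow 0$ fast. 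The obstacle is that $\lz$ has trivial dual, so one cannot separate $0$ from a general closed convex set directly; the way around it is precisely the solidity and the $(X-n)_+$ structure, which let one work with the \emph{order} structure rather than the topology. A standard device here (used in the Kardaras--{\v Z}itkovi{\'c} circle of ideas, and reminiscent of Yan's theorem / the Brannath--Schachermayer bipolar theorem) is to first pass to the solid, convex sets $\On$ and argue that $\bigcap_n \On = \set{0}$ is equivalent to the statement that the "sizes" $\sup\set{\prob[Y > \delta] : Y \in \On}$ or a comparable quantitative functional tend to $0$, and then run an exhaustion argument on each $\On$ to extract a maximal element and build the equivalent measure. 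I expect the paper defers exactly this measure-construction to Appendix \ref{sec: proof}.

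Concretely, here is the route I would write up. First reduce to the case $\X$ solid and convex: replacing $\X$ by $\soco\X$ changes neither condition $(1)$ (the sets $\On$ depend on $\X$ only through $\soco\set{(X-n)_+}$, and $(Z-n)_+ \le \sum\alpha_i(X_i - n)_+$ when $Z \le \sum\alpha_i X_i$, so $\soco\X$ produces the same $\On$) nor condition $(2)$ (uniform $\qprob$-integrability passes to solid convex hulls, since $\expecq[(Z-n)_+]$ is controlled by a convex combination of the $\expecq[(X_i-n)_+]$). Then, working with the non-increasing solid convex sets $\On$, I would establish the quantitative reformulation: $\bigcap_n \On = \set{0}$ if and only if for every $\eps > 0$ there is $n$ with $\On \subseteq \set{Y \in \lzp : \prob[Y > \eps] \le \eps}$ — the nontrivial direction being a diagonal/compactness argument: if not, one gets $Y_n \in \On$ bounded away from $0$ in probability, and using solidity and Komlós-type or forced-convergence arguments one extracts a nonzero element of $\bigcap_n \On$. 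Finally, given this uniform smallness, construct $\qprob$: choose $\eps_n \downarrow 0$ with $\sum_n \eps_n < \infty$ and indices $n_k$ with $\On[n_k] \subseteq \set{\prob[Y > \eps_k] \le \eps_k}$; for each level apply a Halmos--Savage / exhaustion step to get $g_k \in \lzpp$, $\expecp[g_k]\le 1$, with $\sup_{Y \in \On[n_k]}\expecp[g_k Y]$ small; then set $\ud\qprob/\ud\prob \propto \sum_k 2^{-k} g_k$ (normalized), note it is strictly positive so $\qprob \sim \prob$, and check $\sup_{X \in \X}\expecq[(X - n_k)_+] \to 0$, which by Proposition \ref{prop: UI_opt} gives uniform $\qprob$-integrability. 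The main obstacle throughout is the absence of a dual on $\lz$: every separation step must be replaced by an order-theoretic exhaustion argument (taking essential suprema of families bounded in probability), and getting the estimates to be \emph{uniform} over $\X$ simultaneously at all levels — so that the single density $\qprob$ works — is the delicate bookkeeping that I expect occupies Appendix \ref{sec: proof}.
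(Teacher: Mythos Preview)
Your treatment of $(2) \Rightarrow (1)$ is correct and essentially matches the paper's (the paper actually proves slightly more, namely $\bigcap_n \overline{\On} = \set{0}$, for later use, but the argument is the same).

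For $(1) \Rightarrow (2)$ there is a genuine gap. Your quantitative reformulation ``for every $\eps>0$ there is $n$ with $\On \subseteq \set{Y : \prob[Y>\eps]\le \eps}$'' is provable, but it does \emph{not} yield the step you need next. From ``$\On$ is small in probability'' you want to extract $g_k \in \lzp$, strictly positive, with $\sup_{Y \in \mathcal{O}_{n_k}^\X} \expecp[g_k Y]$ small. That inference fails: the set $\set{Y : \prob[Y>\eps]\le \eps}$ contains, for any fixed $B \in \F$ with $\prob[B]=\eps$, all random variables $M\indic_B$ with $M>0$, so in general no strictly positive $g$ can make the supremum of $\expecp[gY]$ over such a set even finite, let alone small. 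What is missing is a boundedness step: under $\bigcap_n \On = \set{0}$ one must first show that $\mathcal{O}_1^\X$ (hence every $\On$) is bounded in $\lz$. The paper does exactly this (via the Brannath--Schachermayer decomposition: if $\mathcal{O}_1^\X$ had a nontrivial hereditarily-unbounded part on some $\Omega \setminus B$, then $\indic_{\Omega\setminus B}\lzp \subseteq \bigcap_n \overline{\On}$, contradicting the hypothesis), and this is what makes any Halmos--Savage / Yan-type separation possible.

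There is a second, related problem with your assembly step. Even granting densities $g_k>0$ with $\sup_{Y \in \mathcal{O}_{n_k}^\X}\expecp[g_k Y]\le \delta_k$, setting $g=\sum_k 2^{-k} g_k$ gives, for $Y \in \mathcal{O}_{n_m}^\X$, the bound $\expecp[gY] \le \sum_{k\le m}2^{-k}\delta_k + \sum_{k>m}2^{-k}\expecp[g_k Y]$, and you have no control over the terms $\expecp[g_k Y]$ for $k>m$; nothing forces these to be bounded, let alone uniformly in $m$, so you cannot conclude $\sup_{Y\in\mathcal{O}_{n_m}^\X}\expecp[gY]\to 0$. The paper handles this by first producing a single $\mu_0 \sim \prob$ with $\sup_{Y\in\mathcal{O}_1^\X}\langle\mu_0,Y\rangle \le 1$ (which exists precisely because $\mathcal{O}_1^\X$ is bounded), then building near-equivalent measures $\mu_k$ with $\lim_n \sup_{Y\in\On}\langle\mu_k,Y\rangle = 0$ via a bipolar/duality argument (showing $\overline{\bigcup_n \On^\circ} = \Mp$ and extracting ``rays'' inside $\bigcup_n \On^\circ$), and finally combining the truncations $\nu_k = \mu_k \wedge \mu_0$; the cap against $\mu_0$ is exactly what provides the missing uniform tail control in the sum.
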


\begin{proof}
Implication $(1) \Rightarrow (2)$ is quite technical, and is discussed in Appendix \ref{sec: proof}. Implication $(2) \Rightarrow (1)$ is almost immediate; however, we shall prove a stronger conclusion, namely, that condition (2) implies $\bigcap_{\nin} \oOn = \set{0}$, which in turn trivially implies condition (1). The reason for establishing this (seemingly) stronger implication is that the latter will be used in the proof of Proposition \ref{prop: decomposition} later on.

Assume condition (2) of Theorem \ref{thm: UI}. As $(\oOn)_{\nin}$ is a non-increasing sequence of convex, solid and closed subsets of $\lzp$ (see discussion after the proof of Proposition \ref{prop: soco}), it follows in a straightforward way that $\bigcap_{\nin}\oOn$ coincides with the set of all the limits of sequences $(Z_n)_{\nin}$ with the property that $Z_n \in \On$ holds for all $\nin$. Given the existence of a probability $\qprob \sim \prob$ as in condition (2), any such sequence $(Z_n)_{\nin}$ with $Z_n \in \On$ for all $\nin$ satisfies
\[
\limsup_{n \to \infty} \expecq \bra{Z_n} \leq \limsup_{n \to \infty} \pare{\sup_{X \in \X} \expecq \bra{(X-n)_+}} = 0,
\]
as follows from Proposition \ref{prop: UI_opt} and the convexity of $\Real \ni x \mapsto x_+$. Since $Z_n \in \lzp$ for all $\nin$, it follows that $(Z_n)_{\nin}$ converges to zero for the $\Lb^1(\qprob)$-topology, which implies that $\limn Z_n = 0$. Therefore, $\bigcap_{\nin}\oOn = \set{0}$, which also implies that $\bigcap_{\nin} \On = \set{0}$.
\end{proof}

\begin{rem} \label{rem: general L_zero}
A set $\X \subseteq \lz$ is uniformly $\qprob$-integrable for some probability $\qprob \ll \prob$ if and only if $\set{|X| \such X \in \X} \subseteq \lzp$ is uniformly $\qprob$-integrable. It then follows that Theorem \ref{thm: UI} can be extended to the case where $\X$ is an arbitrary subset of $\lz$, provided that one alters the definition of $\On$ in \eqref{eq: X_opt} with $\On \dfn \soco \set{(|X| - n)_+ \such X \in \X}$ for all $\nin$.
\end{rem}

\subsection{Connections of Theorem \ref{thm: UI} with Financial Mathematics} \label{subsec: finance}

Suppose that $\X \subseteq \lzp$ represents financial positions available at some future time $T$. For $X \in \X$ and $k \in \Real_+$, the random variable $(X - k)_+$ is the payoff of an option to receive the position $X$ upon paying $k$ units of cash at time $T$. Given this interpretation, the set $\set{(X - n)_+ \such X \in \X}$ coincides with all options to buy $X \in \X$ for the fixed strike price $n \in \Natural$. A probability $\qprob \sim \prob$ can be used for valuation of financial contracts, assigning the value $\expecq[Z]$ to a contract that will pay the amount $Z \in \lzp$ at time $T$. Given the previous understanding, the statement of Theorem \ref{thm: UI} has the following financial interpretation: there either exists a valuation probability $\qprob \sim \prob$ such that the value of options of the form $(X-n)_+$ for $X \in \X$ under $\qprob$ converges to zero as $n$ tends to infinity \emph{uniformly} over all $X \in \X$, or the structure of $\X$ is rich enough to allow for the possibility of super-hedging a fixed positive (non-zero) position using convex combinations of options with arbitrary large strike prices, in the sense that there exists $Y \in \lzp$ with $\prob \bra{Y > 0} > 0$ and a sequence $(Z_n)_{\nin}$ such that $Y \leq Z_n \in \conv \set{(X - n)_+ \such X \in \X}$ holds for all $\nin$.

The discussion of the previous paragraph applies also to options allowing exchange of positions in $\X$ for units of some random payoff other than cash. This becomes easier understood via use of the method of \num-change. (For an illustration of this technique in a dynamic semimartingale environment, see \cite{MR1381678}.) In accordance to \eqref{eq: X_opt}, for $Y \in \lzp$ define $\Oy \dfn \soco \set{(X - Y)_+ \such X \in \X}$. For $Y \in \lzp$ with $\prob \bra{Y > 0} = 1$, the set $Y^{-1} \X = \set{ X/ Y \such X \in \X}$ consists of positions in $\X$ denominated in units of $Y$; in other words, $Y$ is used as a \num. Since $(X/Y - n)_+ = Y^{-1} (X - n Y)_+$ holds for all $X \in \X$ and $\nin$, it is straightforward to check that $\mathcal{O}_n^{Y^{-1} \X} = Y^{-1} \Ony$ holds for all $\nin$. Note that statement (2) of Theorem \ref{thm: UI} is invariant under such changes of \num: if it holds for $\X$, it also holds for $Y^{-1} \X$ whenever $Y \in \lzp$ is such that $\prob \bra{Y > 0} = 1$; it then follows that
\[
\bigcap_{\nin} \On = \set{0} \Longleftrightarrow \bigcap_{\nin} \mathcal{O}_n^{Y^{-1} \X} = \set{0} \Longleftrightarrow \bigcap_{\nin} Y^{-1} \Ony = \set{0} \Longleftrightarrow \bigcap_{\nin} \Ony = \set{0}.
\]

\subsection{A decomposition result} \label{subsec: decomp}

The next is an interesting ``decomposition'' result, which is mainly a corollary of Theorem \ref{thm: UI} (in the form discussed in Remark \ref{rem: general L_zero}).

\begin{prop} \label{prop: decomposition}
A set $\X \subseteq \lz$ either fails to be uniformly $\qprob$-integrable for all probabilities $\qprob \ll \prob$, or there exists some probability $\qprob_\X \ll \prob$ with the following properties:
\begin{itemize}
	\item $\X$ is uniformly $\qprob_\X$-integrable, and
	\item whenever a probability $\qprob \ll \prob$ is such that $\qprob \bot \qprob_\X$, $\X$ fails to be uniformly $\qprob$-integrable.
\end{itemize}
\end{prop}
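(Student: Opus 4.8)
The plan is to prove Proposition \ref{prop: decomposition} by an exhaustion argument, extracting a maximal "uniformly integrable component" of $\X$ via a supremum over a suitable collection of disjoint sets on which $\X$ behaves well.

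First I would reduce to the case $\X \subseteq \lzp$ by passing to $\set{|X| \such X \in \X}$, as justified by Remark \ref{rem: general L_zero}. Assuming we are not in the first alternative, there is some $\qprob_0 \ll \prob$ under which $\X$ is uniformly integrable. Consider the collection $\mathcal{A}$ of all measurable sets $A \in \F$ such that there exists a probability $\qprob \ll \prob$ with $\qprob[A^c] = 0$ (i.e. $\qprob$ is carried by $A$) and $\X$ uniformly $\qprob$-integrable; equivalently, using Theorem \ref{thm: UI} in the form of Remark \ref{rem: general L_zero}, $A \in \mathcal{A}$ iff $\bigcap_{\nin} \On^{(A)} = \set{0}$ where $\On^{(A)} \dfn \soco\set{(|X| - n)_+ \indic_A \such X \in \X}$ — roughly, $A$ is a set on which the "options with large strikes" carry no mass asymptotically. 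The key structural step is to show $\mathcal{A}$ is closed under countable unions: if $A = \bigcup_k A_k$ with each $A_k \in \mathcal{A}$ carried by a probability $\qprob_k$ under which $\X$ is uniformly integrable, then $\qprob_\X \dfn \sum_k 2^{-k} \qprob_k$ (after discarding trivial terms) is carried by $A$, and one checks $\X$ is uniformly $\qprob_\X$-integrable: for each $X$, $\expec_{\qprob_\X}[(|X|-n)_+] = \sum_k 2^{-k} \expec_{\qprob_k}[(|X|-n)_+ \indic_{A_k'}]$ for a disjointification $A_k'$ of the $A_k$, and one splits the sum at a large index $K$ (tail $\leq \sum_{k > K} 2^{-k} \sup_X \expec_{\qprob_k}[|X|]$, which requires first normalizing so that $\sup_X \expec_{\qprob_k}[|X|]$ stays controlled — this can be arranged by rescaling each $\qprob_k$'s contribution, or by first noting $\sup_X \expec_{\qprob_k}[|X|] < \infty$ and choosing weights $\varepsilon_k$ decaying fast enough relative to these suprema) and bounds the finite head using uniform integrability under each $\qprob_k$. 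Hence $A \in \mathcal{A}$.

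Next I would let $p^* \dfn \sup\set{\prob[A] \such A \in \mathcal{A}}$, pick sets $A_k \in \mathcal{A}$ with $\prob[A_k] \to p^*$, and set $A^* \dfn \bigcup_k A_k \in \mathcal{A}$, so $\prob[A^*] = p^*$ and the supremum is attained. By definition of $\mathcal{A}$ there is a probability $\qprob_\X \ll \prob$ carried by $A^*$ under which $\X$ is uniformly $\qprob_\X$-integrable; this gives the first bullet. For the second bullet, suppose $\qprob \ll \prob$ with $\qprob \bot \qprob_\X$ but $\X$ uniformly $\qprob$-integrable. Since $\qprob_\X$ is carried by $A^*$, the set $B$ carrying $\qprob$ can be taken inside $\Omega \setminus A^*$ (up to null sets), so $B \in \mathcal{A}$ with $B \cap A^* = \emptyset$; but then $A^* \cup B \in \mathcal{A}$ by the union-closure, and since $\qprob[B] > 0$ we would want $\prob[A^* \cup B] > \prob[A^*] = p^*$ — this needs $\prob[B] > 0$, which holds because $\qprob \ll \prob$ and $\qprob[B] = 1$ forces $\prob[B] > 0$ (otherwise $\qprob[B] = 0$). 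This contradicts maximality of $p^*$, establishing the second bullet.

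The main obstacle I anticipate is the countable-union closure of $\mathcal{A}$, specifically controlling the $\Lb^1$-masses $\sup_{X \in \X} \expec_{\qprob_k}[|X|]$ so that the mixture $\qprob_\X = \sum_k \varepsilon_k \qprob_k$ is not merely a probability but actually renders $\X$ uniformly integrable; a clean way around this is to first replace each $\qprob_k$ by an equivalent-on-$A_k$ probability $\qprob_k'$ for which $\sup_{X \in \X} \expec_{\qprob_k'}[|X|] \leq 1$ — this is possible since $\X$ uniformly $\qprob_k$-integrable implies $\sup_X \expec_{\qprob_k}[|X|] =: c_k < \infty$, and one can either rescale within the mixture weights or, more robustly, use that uniform integrability is preserved under an equivalent change of measure with bounded density in both directions isn't quite enough, so instead simply choose the mixture weights $\varepsilon_k \dfn 2^{-k}/(1 + c_k)$ and verify directly. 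A secondary subtlety is making sure that in the second bullet one may genuinely choose the carrier of $\qprob$ disjoint from $A^*$; this follows from $\qprob \bot \qprob_\X$ together with $\qprob_\X$ being carried by $A^*$, giving a $\qprob$-carrier $B$ with $\qprob_\X[B] = 0$, and since $\qprob_\X$ is only required to be carried by $A^*$ (not equivalent to $\prob$ there) one can shrink $B$ to lie in $\Omega \setminus A^*$ modulo a $\qprob$-null set. With these points handled, the argument is a routine exhaustion.
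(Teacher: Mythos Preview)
Your exhaustion strategy differs from the paper's proof, which proceeds more structurally: it sets $\Ko \dfn \bigcap_{\nin} \oOn$ (convex, solid, closed), takes $\Omega_w$ to be the complement of the essential union of the supports $\set{Y > 0}$ over $Y \in \Ko$, and then invokes Theorem~\ref{thm: UI} on $\Omega_w$ to produce $\qprob_\X$ and on subsets of $\Omega \setminus \Omega_w$ to obtain the failure of uniform integrability. No measure-mixing or exhaustion is required.

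Your argument, however, contains a genuine gap in the definition of $\mathcal{A}$. As written, $A \in \mathcal{A}$ whenever \emph{some} $\qprob$ with $\qprob[A^c] = 0$ renders $\X$ uniformly integrable; this makes $\mathcal{A}$ closed under supersets, so $\Omega \in \mathcal{A}$, $p^* = 1$, $A^* = \Omega$, and the exhaustion is vacuous. Relatedly, your ``equivalent'' description $\bigcap_n \mathcal{O}_n^{(A)} = \set{0}$ is \emph{not} equivalent to your stated definition: by Theorem~\ref{thm: UI} it characterizes those $A$ admitting $\qprob$ \emph{equivalent to $\prob$ on $A$} with $\X$ uniformly $\qprob$-integrable, which is strictly stronger. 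The same issue breaks the second bullet: from $\qprob \bot \qprob_\X$ you cannot force the carrier of $\qprob$ outside $A^*$ unless $\qprob_\X \sim \prob$ on $A^*$, and your mixture construction does not guarantee this (your attempted resolution of this ``secondary subtlety'' is not correct). The repair is simple: take $\mathcal{A}$ to be the collection of \emph{carriers} $\set{\ud \qprob / \ud \prob > 0}$ of such $\qprob$ (equivalently, use the structural condition as the definition). Then the mixture $\sum_k \varepsilon_k \qprob_k$ has carrier exactly $A^* = \bigcup_k A_k$, so $\qprob_\X \sim \prob$ on $A^*$, and both the exhaustion and the maximality contradiction go through as you intend.
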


\begin{proof}
By considering $\set{|X| \such X \in \X} \subseteq \lzp$ instead of $\X$, we may assume that $\X \subseteq \lzp$. In the notation of \eqref{eq: X_opt}, define $\Ko \dfn \bigcap_{\nin} \oOn$. Note that $\Ko$ is convex, solid and closed, as it is the intersection of sets with the corresponding properties. It then follows in a straightforward way that there exists $\Omega_w \in \F$ such that:
\begin{itemize}
	\item $\prob \bra{\Omega_w \cap \set{Y > 0} } = 0$ holds for all $Y \in \Ko$.
	\item for any $A \subseteq \Omega \setminus \Omega_w$ with $\prob \bra{A} > 0$, there exists $Z \equiv Z_A \in \Ko$ such that $\prob \bra{A \cap \set{Z > 0}} > 0$ holds.
\end{itemize}
(Clearly, such $\Omega_w \in \F$ is unique modulo null sets.) If $\prob \bra{\Omega_w} = 0$, Theorem \ref{thm: UI} implies that $\X$ fails to be uniformly $\qprob$-integrable for all probabilities $\qprob \ll \prob$. On the other hand, if $\prob \bra{\Omega_w} > 0$, then using the notation $A^\qprob \dfn \set{\ud \qprob / \ud \prob > 0}$ for probabilities $\qprob \ll \prob$, Theorem \ref{thm: UI} implies that there exists $\qprob_\X \ll \prob$ with $A^{\qprob_\X} = \Omega_w$ (modulo null sets) such that $\X$ is uniformly $\qprob_\X$-integrable. In this case, when $\qprob \ll \prob$ is such that $\qprob \bot \qprob_\X$ then $A^{\qprob} \subseteq \Omega \setminus \Omega_w$, which implies again by Theorem \ref{thm: UI} that $\X$ fails to be uniformly $\qprob$-integrable. 
\end{proof}

\begin{rem}
Let $\X \subseteq \lz$, and suppose that $\X$ is uniformly $\qprob$-integrable for some probability $\qprob \ll \prob$. In this case, if both $\qprob_\X \ll \prob$ and $\qprob'_\X \ll \prob$ have the properties mentioned in Proposition \ref{prop: decomposition}, it necessarily holds that $\qprob_\X \sim \qprob'_\X$.
\end{rem}

\begin{rem} \label{rem: Bra-Sch}
In \cite{MR1768009}, given a convex set $\X \subseteq \lzp$, the authors show that there exists a set $\Omega_b \in \F$ such that $\indic_{\Omega_b} \X$ is bounded while $\X$ is \emph{hereditarily} unbounded on $\Omega \setminus \Omega_b$ in the sense that $\indic_A \X$ fails to be bounded for all $A \in \F$ with $A \subseteq \Omega \setminus \Omega_b$ and $\prob \bra{A} > 0$. The set $\Omega_b$ satisfying the previous property is necessarily unique (modulo null-sets). Proposition \ref{prop: decomposition} can be seen as a result in this direction; indeed, with the notation in its proof, given $\X \subseteq \lz$, it is shown that there exists a set $\Omega_w \in \F$ such that $\indic_{\Omega_w} \X$ is ``weakly compactizable'' (in the sense that there exists $\qprob \sim \prob$ such that $\indic_{\Omega_w} \X$ is uniformly $\qprob$-integrable) while $\indic_{\Omega \setminus \Omega_w} \X$ ``hereditarily fails to be weakly-compactizable'' (in the sense that $\indic_A \X$ fails to be weakly-compactizable for all $A \in \F$ with $A \subseteq \Omega \setminus \Omega_w$ and $\prob \bra{A} > 0$).
\end{rem}

\section{Local Convexity}

\label{sec: local_conc}

\subsection{The second main result}

We start with a definition of a concept has played a major role in the theory of Financial Mathematics, usually utilized in an indirect manner---see, for example, \cite[Lemma A1.1]{MR1304434}.

\begin{defn}
Let $(X_n)_{\nin}$ be a sequence in $\lz$. Any $\lz$-valued sequence $(Y_n)_{\nin}$ with the property that $Y_n \in \conv \set{X_k \such n \leq k \in \Natural }$ for all $\nin$ will be called a \textsl{sequence of forward convex combinations of $(X_n)_{\nin}$}.
\end{defn}

Let us agree to call a convex set $\X \subseteq \lz$ \textsl{locally convex for the $\lz$-topology} if any element of $\X$ has a neighbourhood base (for the relative $\lz$-topology on $\X$) consisting of convex sets. (Such definition is classical in the case where $\X$ is a topological vector space; however, we only require $\X \subseteq \lz$ to be convex.) Suppose that a convex set $\X \subseteq \lz$ is locally convex for the $\lz$-topology; then, whenever $(X_n)_{\nin}$ is an $\X$-valued sequence  that converges to $X \in \X$, all sequences of forward convex combinations of $(X_n)_{\nin}$ also converge to $X$.

The second main result of the paper that follows connects, amongst other things, local convexity of the $\lz$-topology of $\X \subseteq \lzp$ with uniform $\qprob$-integrability of $\X$ for some $\qprob \sim \prob$, in the case where $\X$ is convex, solid and bounded.

\begin{thm} \label{thm: local conv}
Let $\X \subseteq \lzp$ be a convex, solid and bounded set. Then, the following statements are equivalent:
\begin{enumerate}
	\item Whenever $(X_n)_{\nin}$ is an $\X$-valued sequence that converges to zero, all sequences of forward convex combinations of $(X_n)_{\nin}$ also converge to zero.
	\item $0 \in \X$ has a neighbourhood base (for the relative $\lz$-topology) consisting of convex sets.
	\item Any $X \in \X$ has a neighbourhood base (for the relative $\lz$-topology) consisting of convex sets.
	\item The $\lz$-topology on $\X$ coincides with the $\Lb^1(\qprob)$-topology on $\X$ for some $\qprob \sim \prob$.
 	\item $\X$ is uniformly $\qprob$-integrable with respect to some $\qprob \sim \prob$.
\end{enumerate}
\end{thm}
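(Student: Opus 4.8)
The plan is to prove the cycle of implications $(5) \Rightarrow (4) \Rightarrow (3) \Rightarrow (2) \Rightarrow (1) \Rightarrow (5)$, so that the only genuinely substantial arrow is $(1) \Rightarrow (5)$; the rest should be short.

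First I would dispatch the easy implications. For $(5) \Rightarrow (4)$: if $\X$ is uniformly $\qprob$-integrable for some $\qprob \sim \prob$, then on $\X$ convergence in $\prob$-probability (equivalently $\qprob$-probability) together with uniform $\qprob$-integrability gives convergence in $\Lb^1(\qprob)$, by the standard Vitali-type result cited in the introduction (\cite[Proposition 4.12]{MR1876169}); the converse containment of topologies is automatic since $\Lb^1(\qprob)$-convergence always implies convergence in probability. Hence the two relative topologies on $\X$ coincide. For $(4) \Rightarrow (3)$: the $\Lb^1(\qprob)$-topology is a norm topology, hence locally convex, so each point of $\X$ has a base of (relatively) convex neighbourhoods, and by $(4)$ the same is true for the $\lz$-topology. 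The step $(3) \Rightarrow (2)$ is trivial (specialize to $X = 0 \in \X$, which lies in $\X$ because $\X$ is solid). For $(2) \Rightarrow (1)$: this is exactly the remark made just before the statement of the theorem — if $0$ has a base of convex neighbourhoods and $X_n \to 0$ with $X_n \in \X$, then for any convex neighbourhood $V$ of $0$ we have $X_k \in V$ for large $k$, hence every forward convex combination $Y_n \in \conv\set{X_k \such k \geq n}$ lies in $V$ for large $n$; thus $Y_n \to 0$. I would include a one-line proof rather than just cite the remark.

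The heart of the matter is $(1) \Rightarrow (5)$, and the plan is to reduce it to Theorem \ref{thm: UI}, i.e.\ to show $\bigcap_{\nin} \On = \set{0}$ where $\On = \soco\set{(X-n)_+ \such X \in \X}$. Since $\X$ is convex and solid, $\soco\set{(X-n)_+ \such X\in\X} = \conv\set{(X-n)_+ \such X\in\X}$ by Proposition~\ref{prop: soco} and the discussion after \eqref{eq: X_opt}. Suppose for contradiction that there is $Y \in \lzp$ with $\prob[Y>0]>0$ and $Y \in \On$ for every $n$, i.e.\ for each $n$ there is $Z_n \in \conv\set{(X-n)_+ \such X\in\X}$ with $Y \leq Z_n$. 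Write $Z_n = \sum_{i} \alpha_i^n (X_i^n - n)_+$ with $X_i^n \in \X$, $\alpha_i^n \geq 0$, $\sum_i \alpha_i^n = 1$. The idea is to manufacture from these data an $\X$-valued sequence converging to zero whose forward convex combinations do \emph{not} converge to zero, contradicting $(1)$. Two ingredients drive this. First, boundedness of $\X$ forces $(X-n)_+ \to 0$ in probability uniformly over $X \in \X$ (indeed $\prob[(X-n)_+ > \eps] \leq \prob[|X|>n] \to 0$ uniformly), so each single truncated element $(X_i^n - n)_+$ is small; more to the point, one should arrange the indices so that a suitably assembled sequence of the building blocks tends to $0$ in probability. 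Second, because $Y \leq Z_n$ with $Z_n$ a \emph{convex combination} of these small building blocks, the blocks cannot be made to vanish after taking convex combinations along the tail — which is precisely the failure of $(1)$. Concretely, I would extract, using a diagonal/relabelling argument, a single sequence $(W_m)_{m\in\Natural}$ in $\X$ such that $W_m \to 0$ (choosing the strike levels $n$ increasing fast enough and invoking uniform boundedness), yet such that along appropriate forward convex combinations one reconstructs elements dominating $Y$, so these forward convex combinations stay bounded away from $0$. Then $(1)$ is violated. Hence $\bigcap_n \On = \set{0}$, and Theorem~\ref{thm: UI} yields $\qprob \sim \prob$ with $\X$ uniformly $\qprob$-integrable, which is $(5)$.

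The main obstacle is exactly this construction in $(1) \Rightarrow (5)$: turning the ``for each $n$, $Y$ is dominated by a convex combination of $n$-truncations'' into a single bad sequence and a single bad sequence of forward convex combinations. The subtlety is bookkeeping — the convex combinations $Z_n$ for different $n$ involve different, finitely many, elements of $\X$ with different weights, and one must interleave them into one sequence $(W_m)$ (enumerating, for each $n$ in turn, the finitely many $X_i^n$) so that (a) $W_m \to 0$, using that $W_m$ is always of the form $X$ with $X$ appearing in the $Z_n$ for $n$ growing, combined with $\X$-boundedness to kill $(X - n)_+$ but we actually need $X$ itself small, which it need \emph{not} be — so instead one works with the truncated blocks $(X_i^n - n)_+$ themselves as the sequence, noting these \emph{do} go to $0$ uniformly, and exploits that $\X$ solid implies $(X_i^n - n)_+ \wedge C \in \X$-type membership is not quite available, so one must be a little careful and possibly pass to $(X_i^n - n)_+ \wedge Z_n$ or truncate at the level of $Y$. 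The cleanest route is probably: the blocks $B_i^n := (X_i^n - n)_+$ satisfy $B_i^n \leq X_i^n \in \X$ only after we also know $B_i^n \in \lzp$, and since $\X$ is solid and $(X_i^n-n)_+ \le X_i^n$, indeed $B_i^n \in \X$. So let $(W_m)$ enumerate all the $B_i^n$, ordered by increasing $n$; then $W_m \to 0$ in probability by boundedness of $\X$ (each $B_i^n \le (\sup$ over $\X$ of $(\cdot-n)_+)$ in probability), while the forward convex combination that picks out, for the block of indices corresponding to a fixed large $n$, the weights $\alpha_i^n$, equals $Z_n \geq Y$, so that forward convex combination sequence has a subsequence bounded below by $Y \neq 0$ and cannot converge to $0$. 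This contradicts $(1)$, completing the argument; I expect the write-up of this enumeration and the uniform-convergence estimate to be the only place requiring real care.
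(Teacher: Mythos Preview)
Your proposal is correct and follows essentially the same route as the paper: the easy chain $(5)\Rightarrow(4)\Rightarrow(3)\Rightarrow(2)\Rightarrow(1)$ is disposed of quickly, and for $(1)\Rightarrow(5)$ one argues by contraposition via Theorem~\ref{thm: UI}, taking $Y\in\bigcap_n\On\setminus\{0\}$, enumerating the building blocks $(X_i^n-n)_+$ (which lie in $\X$ by solidity and tend to $0$ in probability by boundedness of $\X$) into a single $\X$-valued sequence converging to $0$, and exhibiting forward convex combinations that dominate $Y$. The only cosmetic difference is that the paper uses the equality $\On=\conv\{(X-n)_+:X\in\X\}$ (from solidity of $\X$) to take $Y$ itself as the constant forward convex combination, whereas you work with $Z_n\geq Y$; both versions work.
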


\begin{proof}
The chain of implications $(5) \Rightarrow (4) \Rightarrow (3) \Rightarrow (2) \Rightarrow (1)$ in Theorem \ref{thm: local conv} is straightforward.

Assume that condition $(5)$ fails. Define $\X'_n \dfn \set{(X - n)_+ \such X \in \X}$ for all $\nin$. Then, $\X'_n$ is a solid subset of $\X$, in view of the solidity of $\X$; furthermore, $\On = \conv \X'_n$ holds for all $\nin$ according to \eqref{eq: X_opt} and Proposition \ref{prop: soco} (see also the discussion before the statement of Theorem \ref{thm: UI}). In view of Theorem \ref{thm: UI}, it holds that $\bigcap_{\nin} \On \supsetneq \set{0}$; therefore, there exists $Y \in \lzp$ with $\prob \bra{Y > 0} > 0$ such that $Y \in \conv \X'_n$ holds for each $\nin$. Since $Y$ is a convex combination of elements in $\X'_n$ for all $\nin$, one can construct a $\X$-valued sequence $(X_n)_{\nin}$ with the properties that $X_n \in \X'_n$ for all $\nin$ and the constant sequence $(Y_n)_{\nin}$ with $Y_n = Y$ for all $\nin$ is a sequence of forward convex combinations of $(X_n)_{\nin}$. Since $\X$ is assumed to be bounded,
\[
\limsup_{n \to \infty} \prob \bra{X_n > 0} \leq \ \downarrow \limn \pare{ \sup_{X \in \X} \prob \bra{X > n} } = 0
\]
holds, which implies that $\limn X_n = 0$. Therefore, we have constructed an $\X$-valued sequence $(X_n)_{\nin}$ that converges to $0 \in \X$, of which the (constant, and equal to $Y$) sequence $(Y_n)_{\nin}$ of its forward convex combinations fails to be convergent to zero (since $Y \in \lzp$ is such that with $\prob \bra{Y > 0} > 0$). This implies that condition $(1)$ also fails. Therefore, implication $(1) \Rightarrow (5)$ has been established as well.
\end{proof}

\subsection{Remarks on Theorem \ref{thm: local conv}}

We proceed with several remarks on the hypotheses and statement of Theorem \ref{thm: local conv}

\subsubsection{The structural condition} As mentioned in the proof of Theorem \ref{thm: local conv} (and comes as a consequence of Proposition \ref{prop: soco}), when $\X \subseteq \lzp$ is solid, the set $\On$ defined in  \eqref{eq: X_opt} is equal to $\conv \set{(X - n)_+ \such X \in \X}$ for all $\nin$. Therefore, in view of Theorem \ref{thm: UI}, the conditions of Theorem \ref{thm: local conv} are further equivalent to
\begin{equation} \label{eq: structural}
\bigcap_{\nin} \conv \set{(X - n)_+ \such X \in \X} = \set{0}.
\end{equation}

\subsubsection{The case of subsets of $\lz$} Theorem \ref{thm: local conv} can be extended to cover the case of $\X \subseteq \lz$ that is convex, bounded and solid, where the last property means that whenever $X \in \X$ and $Y \in \lz$ are such that $|Y| \leq |X|$, then $Y \in \X$. The details are straightforward (in this respect, see also Remark \ref{rem: general L_zero}) and are, therefore, omitted. Note that is $X \subseteq \lz$ is solid, $X \in \X$ implies $|X| \in \X$; from this, it is straightforward to see that $\set{(|X| - n)_+ \such X \in \X} = \set{(X - n)_+ \such X \in \X}$; therefore, the structural condition \eqref{eq: structural} remains exactly the same in this case.

\subsubsection{Local convexity at zero}

Let $\X \subseteq \lzp$ be a convex, solid and bounded set. As a consequence of Theorem \ref{thm: local conv}, local convexity of $\X$ for the $\lz$-topology is equivalent to local convexity of $\X$ for the $\lz$-topology only at $0 \in \X$. Clearly, solidity of $\X$ is crucial for this to be true.

\subsubsection{Local convexity and closure} \label{sss: local_conv_close}

Let $\X \subseteq \lzp$ be a convex, solid and bounded set. As Theorem \ref{thm: local conv} suggests, local convexity of $\X$ for the $\lz$-topology implies also local convexity of $\oX$ for the $\lz$-topology. Again, solidity of $\X$ is crucial for this to hold---see \S \ref{subsubsec: solid} later on.

\subsubsection{On boundedness}

Boundedness of a convex and solid set $\X \subseteq \lzp$ in the statement of Theorem \ref{thm: local conv} is clearly necessary in order to have uniform $\qprob$-integrability for some probability $\qprob \sim \prob$. When a convex and solid set $\X \subseteq \lzp$ fails to be bounded, local convexity of $\X$ for the $\lz$-topology in general may not even imply that the $\lz$-topology on $\X$ is the same as the $\Lb^1(\qprob)$-topology for some probability $\qprob \sim \prob$. Indeed, consider $\Omega = \Natural$, $\F$ the collection of all subsets of $\Omega$ and the probability $\prob$ satisfying $\prob \bra{ \set{i}} = 2^{-i}$ for all $i \in \Natural$. It is then straightforward to check that $\lz$ is isomorphic to $\Real^\Natural$ equipped with the product topology. In particular, $\X \equiv \lzp$ is locally convex for the $\lz$-topology. However, $\lzp$ even fails to be a subset of $\Lb^1(\qprob)$ for any $\qprob 
\sim \prob$.

More generally, Proposition \ref{prop: UI_unbdd} that follows will complement Theorem \ref{thm: local conv}. Note that, as a consequence of the bipolar theorem in $\lz$ \cite[Theorem 1.3]{MR1768009}, whenever $\X \subseteq \lzp$ is convex and solid, there exists $\Omega_b \in \F$ such that $\indic_{\Omega_b} \X$ is bounded while $\indic_{\Omega \setminus \Omega_b} \oX = \indic_{\Omega \setminus \Omega_b} \lzp$. Since the case of bounded $\X \subseteq \lzp$ is covered by Theorem \ref{thm: local conv}, we turn attention at what is happening in the hereditarily unbounded part (the notion of hereditary unboundedness has been introduced in \cite{MR1768009} and is reviewed in Remark \ref{rem: Bra-Sch}); it is then sufficient to focus on the case $\X = \lzp$. 

\begin{prop} \label{prop: UI_unbdd}
The set $\lzp$ is locally convex for the $\Lb^0$-topology if and only if the underlying probability space is purely atomic. In this case, the $\lz$-topology on $\lzp$ coincides with the $\Lb^1(\qprob)$-topology on $\X$ for some $\qprob \sim \prob$ if and only if the underlying probability space (is purely atomic and) has only a finite number of atoms (and any $\qprob \sim \prob$ will do in this csae).
\end{prop}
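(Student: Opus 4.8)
The plan is to handle the two equivalences in turn, in each case separating the "easy" direction (constructing the desired $\qprob$, or exhibiting local convexity) from the "hard" direction (deriving purely atomic / finitely many atoms from a topological hypothesis). Throughout I will use the representation of $\lz$ over a purely atomic space as a countable product $\Real^I$ (with $I$ the index set of atoms) endowed with the product topology, which is what "convergence in probability" reduces to when the $\sigma$-algebra is generated by atoms; this identification is the workhorse for all the positive statements.

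For the first equivalence, the direction "purely atomic $\Rightarrow$ $\lzp$ locally convex" is the clean one: under the identification $\lz \cong \Real^I$, the positive cone is $\Real_+^I$, and the product topology on a countable product is generated by the seminorms $x \mapsto |x_i|$; in particular it is locally convex, and the restriction to any subset (here $\Real_+^I$) inherits a neighbourhood base of convex sets (intersections of the convex product-topology basic neighbourhoods with $\Real_+^I$ are convex). For the converse I would argue contrapositively: if the probability space is not purely atomic, it contains a non-atomic part, on which one can find, for each $\nin$, a partition into $n$ sets of equal positive measure; scaling indicator functions of these pieces produces an $\X$-valued (with $\X = \lzp$) sequence converging to $0$ in probability whose forward convex combinations do not converge to $0$ — exactly the mechanism used in the proof of Theorem \ref{thm: local conv}. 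By the (already-established, for $\X$ bounded, but here applied after truncation to a bounded solid convex subset, or by a direct neighbourhood argument) equivalence between local convexity and stability under forward convex combinations, this contradicts local convexity of $\lzp$. The one point requiring care is that $\lzp$ is unbounded, so one cannot cite Theorem \ref{thm: local conv} verbatim; instead I would run the forward-convex-combination obstruction directly against condition (2)-style local convexity at $0$, which needs only the elementary observation that a convex $0$-neighbourhood in $\lzp$ that is $\lz$-small forces smallness of averages.

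For the second equivalence, assume the space is purely atomic (so $\lz \cong \Real^I$) and ask when the product topology on $\Real_+^I$ agrees with an $\Lb^1(\qprob)$-topology. If $I$ is finite, $\Real^I$ is finite-dimensional, all Hausdorff vector topologies coincide, and $\Lb^1(\qprob) = \lz$ as topological vector spaces for every $\qprob \sim \prob$; restricting to $\Real_+^I$ gives the claim, and indeed any $\qprob \sim \prob$ works. If $I$ is infinite, I would show the two topologies on $\lzp$ cannot coincide: fix any $\qprob \sim \prob$ with weights $(q_i)_{i \in I}$, $\sum q_i = 1$, so $q_i \to 0$ along $I$; then the sequence $X^{(k)} := q_k^{-1} \indic_{\{k\}}$ lies in $\lzp$, converges to $0$ in the product topology (each coordinate is eventually $0$), but $\expecq[X^{(k)}] = 1$ for all $k$, so it does not converge in $\Lb^1(\qprob)$. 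Hence the topologies differ. Combining: among purely atomic spaces, the $\Lb^1(\qprob)$-coincidence holds iff there are finitely many atoms.

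I expect the main obstacle to be the converse direction of the first equivalence — "not purely atomic $\Rightarrow$ $\lzp$ not locally convex" — specifically doing it cleanly without invoking Theorem \ref{thm: local conv} (which requires boundedness). The resolution is to note that local convexity of $\lzp$ at $0$ would, by the same reasoning as in the paragraph preceding Theorem \ref{thm: local conv}, force every sequence of forward convex combinations of a null sequence to be null; then the explicit non-atomic construction above supplies a counterexample. Everything else is either the finite-dimensional triviality or the single explicit sequence $q_k^{-1}\indic_{\{k\}}$, so the write-up should be short once this point is pinned down.
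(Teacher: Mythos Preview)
Your proposal is correct and follows essentially the same route as the paper: the same $\Real^I$ identification for the purely atomic case, the same partition-of-the-non-atomic-part construction producing a null sequence whose forward convex combinations stay at $\indic_A$ (run directly, without invoking Theorem \ref{thm: local conv}, exactly as you anticipate), and the same finite-dimensional argument for finitely many atoms. The only minor variation is in the infinitely-many-atoms case, where the paper simply observes that $\Lb^1_+(\qprob) \subsetneq \lzp$ for every $\qprob \sim \prob$, while you exhibit the sequence $q_k^{-1}\indic_{\{k\}}$; both are one-line arguments and equally valid.
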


\begin{proof}
When the underlying probability space is purely atomic, $\lzp$ will be topologically isomorphic to either $\Real^n_+$ for some $\nin$ or to $\Real^\Natural_+$ (the latter spaces equipped with the usual product topology). In any case, the $\lz$-topology is locally convex. When the probability space fails to be purely atomic, one can find $A \in \F$ with $\prob \bra{A} > 0$ such that $A$ contains no atoms. Then, for each $m \in \Natural$ one can find a partition $(A_{m, k})_{k \in \set{1, \ldots, m}}$ of $A$ such that $\prob \bra{A_{m, k}} = \prob \bra{A} / m$ holds for all $k \in \set{1, \ldots, m}$. Defining the sequence $(X_n)_{\nin}$ via $X_n = m \indic_{A_{m, k}}$ whenever $n = 2^{m-1} + (k-1)$ for $m \in \Natural$ and $k \in \set{1, \ldots, m}$, it holds that $\limn X_n = 0$. However, the constant non-zero sequence $(Z_n)_{\nin}$ defined via $Z_n = \indic_A$ for all $\nin$ is a sequence of forward convex combinations of $(X_n)_{\nin}$. It follows that $\lzp$ cannot be locally convex for the $\lz$-topology.

When the underlying probability space is purely atomic with a finite number of atoms, all $\Lb^1(\qprob)$ spaces, as well as $\lz$, are topologically isomorphic to $\Real^n$, where $\nin$ is the number of distinct atoms. On the other hand, if the underlying probability space is purely atomic with a countably infinite number of atoms, it is straightforward to check that $\Lb_+^1(\qprob) \dfn \Lb^1(\qprob) \cap \lzp$ is a \emph{strict} subset of $\lzp$ for any $\qprob \sim \prob$. 
\end{proof}

\subsubsection{On solidity} \label{subsubsec: solid}

Define $\X \dfn \set{X \in \lzp \such \expecp \bra{X} = 1}$; clearly, $\X$ is convex and bounded. The solid hull of $\X$ is $\So = \set{X \in \lzp \such \expecp \bra{X} \leq 1}$, which is also convex and bounded. It is a consequence of \cite[Proposition 4.12]{MR1876169} that the $\lz$ and $\Lb^1(\prob)$ topologies on $\X$ coincide; in particular, $\X$ is locally convex for the $\lz$-topology. However, when the underlying probability space is non-atomic:

\begin{itemize}
	\item $\X$ fails to be uniformly $\qprob$-integrable for all probabilities $\qprob \sim \prob$. (Indeed, for each $\qprob \sim \prob$ it is straightforward to construct an $\X$-valued sequence $(X_n)_{\nin}$ such that $\limn X_n = 0$ holds, but $\liminf_{n \to \infty} \expecq \bra{X_n} > 0$.) Therefore, the equivalence of statements (4) and (5) in Theorem \ref{thm: local conv} may fail when $\X$ is not solid.
	\item If the underlying probability space is non-atomic, it holds that $\oX = \So$. (Indeed, $\oX \subseteq \So$ holds in view of Fatou's lemma. Conversely, since the underlying probability space is non-atomic, there exists an $\X$-valued sequence $(Y_n)_{\nin}$ such that $\limn Y_n = 0$ holds; then, for any $X \in \So$, the $\X$-valued sequence $(X_n)_{\nin}$ defined via $X_n = X + (1 - \expecp[X]) Y_n$ for all $\nin$ is such that $\limn X_n = X$.) Since $\So$ fails to be locally convex (which can be seen by a similar argument as in the proof of Proposition \ref{prop: UI_unbdd}), the closure of a locally convex set for the $\lz$-topology may fail to be locally convex for the $\lz$-topology. This is in direct contrast with the discussion in \S \ref{sss: local_conv_close} in the case where $\X$ is solid.
\end{itemize}

It is an open question whether the equivalence of statements (3) and (4) of Theorem \ref{thm: local conv} is valid under the assumption that $\X$ is convex and bounded. There does not seem to be a straightforward adaptation of the method of proof provided in this paper to cover this case. A related open question is whether the equivalence between statements (3), (4) and (5) of Theorem \ref{thm: local conv} is valid under the assumption that $\X$ is convex, bounded \emph{and closed}; note that the set $\X$ is the example above is not $\lz$-closed when the underlying probability space is non-atomic, although it is always $\Lb^1(\prob)$-closed.

\appendix

\section{Finishing the Proof of Theorem \ref{thm: UI}} \label{sec: proof}

In order to conclude the proof of Theorem \ref{thm: UI}, it remains to establish implication $(1) \Rightarrow (2)$. Before that, certain prerequisites on polar and bipolar sets will be discussed.

\subsection{Measures and polar sets} \label{subsec: meas_polars}

By $\Mp$ we shall denote the convex cone of all $\sigma$-finite (nonnegative) measures on $(\Omega, \F)$ that are absolutely continuous with respect to $\prob$. Convexity for subsets of $\Mp$ is defined in the usual sense. A set $\D \subseteq \Mp$ is solid if the conditions $\nu \in \D$ and $\mu \in \Mp$ with $\mu \leq \nu$ (in the sense that $\mu[A] \leq \nu[A]$ for all $A \in \F$) imply that $\mu \in \D$.

The set $\Mp$ can be placed in one-to-one correspondence with $\lzp$ via identifying $\mu \in \Mp$ with $(\ud \mu / \ud \prob) \in \lzp$. (Then, convexity and solidity in $\Mp$ are naturally identified with the corresponding properties in $\lzp$.) We endow $\Mp$ with the natural ($\lz$-)topology that comes through this identification. Note that this topology depends on the representative probability $\prob$ only through its null sets. In what follows, topological notions (such as closure) for subsets of $\Mp$ are always understood in this sense.

The next auxiliary result shows that a solid subset of $\Mp$ whose closure is the whole orthant $\Mp$ contains ``rays'' through measures that are ``arbitrarily close to being equivalent to $\prob$.''

\begin{lem} \label{lem: unbdd}
Let $\D \subseteq \Mp$ be solid, and suppose that $\oD = \Mp$. Then, for all $\epsilon \in (0,1)$ there exists $\mu \equiv \mu_\epsilon \in \Mp$ with $\prob \bra{\ud \mu / \ud \prob = 0} < \epsilon$ such that $a \mu \in \D$ holds for all $a \in \Real_+$.
\end{lem}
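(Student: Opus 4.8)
The plan is to combine solidity of $\D$ with the density hypothesis $\oD = \Mp$ to manufacture, for a fixed $\epsilon \in (0,1)$, a single set $B \in \F$ with $\prob \bra{\Omega \setminus B} < \epsilon$ such that the element of $\Mp$ with density $\indic_B$ already generates an entire ray inside $\D$; the required $\mu_\epsilon$ will then simply be the measure with $\prob$-density $\indic_B$. Throughout I identify elements of $\Mp$ with their $\prob$-densities, as in the discussion preceding the lemma.

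First I would feed the constant densities into the hypothesis: for each $\nin$ the finite measure $2n\prob$ lies in $\Mp = \oD$, and since the $\lz$-topology on $\Mp$ is metrizable (so closure is sequential), there is $g_n \in \D$ with $\prob \bra{\abs{g_n - 2n} > n} < \epsilon 2^{-n-1}$. Setting $B_n \dfn \set{\abs{g_n - 2n} \leq n}$, one then has $g_n \geq n$ on $B_n$ and $\prob \bra{\Omega \setminus B_n} < \epsilon 2^{-n-1}$. Next, let $B \dfn \bigcap_{\nin} B_n$, so that $\prob \bra{\Omega \setminus B} \leq \epsilon/2 < \epsilon$. For each $\nin$ the pointwise inequality $n \indic_B \leq g_n$ holds (on $B \subseteq B_n$ because $g_n \geq n$ there, and trivially off $B$); since $n\indic_B$ is the density of an element of $\Mp$ and $g_n \in \D$, solidity of $\D$ yields $n \indic_B \in \D$ for every $\nin$.

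Finally, take $\mu_\epsilon \in \Mp$ to be the measure with density $\indic_B$; then $\prob \bra{\ud \mu_\epsilon / \ud \prob = 0} = \prob \bra{\Omega \setminus B} < \epsilon$, and for arbitrary $a \in \Real_+$ one picks $\nin$ with $n \geq a$, observes $a \indic_B \leq n \indic_B \in \D$ with $a\indic_B$ the density of an element of $\Mp$, and concludes $a \mu_\epsilon \in \D$ by solidity. This delivers the statement.

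I do not expect a genuine obstacle here — the argument is soft — but there are two points to handle with care. The first is bookkeeping: every auxiliary object used ($n\indic_B$, $a\indic_B$, and the truncation implicit in passing from $g_n$ to $n\indic_{B_n}$) must be a legitimate element of $\Mp$, i.e.\ the density of a $\sigma$-finite measure absolutely continuous with respect to $\prob$; this is immediate because all of these densities are bounded. The second, and the only idea worth isolating, is that one should \emph{not} try to work with the approximants $g_n$ directly, since their supports vary with $n$; instead one truncates each $g_n$ down to $n\indic_{B_n} \in \D$ first, which is precisely what makes it legitimate to intersect the sets $B_n$ into a common $B$. Once $n\indic_B \in \D$ for all $\nin$, solidity automatically upgrades this to the whole ray $\set{a \mu_\epsilon \such a \in \Real_+}$.
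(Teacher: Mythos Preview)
Your proposal is correct and follows essentially the same route as the paper: approximate the constant densities $k$ (the paper uses $k\prob$, you use $2n\prob$) by elements of $\D$, use solidity to replace each approximant by $k\indic_{A_k}$ (or $n\indic_{B_n}$) for a set of large probability, intersect these sets, and invoke solidity once more to obtain the full ray. The only difference is that you spell out explicitly the passage from the approximant $g_n$ to $n\indic_{B_n}$, whereas the paper compresses this into a single sentence.
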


\begin{proof}
Fix $\epsilon \in (0,1)$. Since $k \prob \in \Mp = \oD$ for each $\kin$ and $\D$ is solid, it follows that there exists $A_k \in \F$ with the properties that $\prob \bra{A_k} > 1 - \epsilon / 2^k$ and $k \mu_k \in \D$, where $\ud \mu_k / \ud \prob = \indic_{A_k}$. Define $A \dfn \bigcap_{\kin} A_k \in \F$ and $\mu$ via $\ud \mu \dfn \indic_A \ud \prob$. Note that $\prob[\ud \mu / \ud \prob = 0] = \prob[\Omega \setminus A] < \epsilon$. Furthermore, $k \mu \leq k \mu_k$ holds for all $\kin$; since $k \mu_k \in \D$ and $\D$ is solid, $k \mu \in \D$ holds for all $\kin$. The solidity of $\D$ gives that $a \mu \in \D$ for all $a \in \Real_+$.
\end{proof}

For two sets $\C \subseteq \lzp$ and $\D \subseteq \Mp$, their polars $\C^\circ \subseteq \Mp$ and $\D^\circ \subseteq \lzp$ are defined via
\[
\C^\circ \dfn \set{\mu \in \Mp \such \sup_{X \in \C} \inner{\mu}{X} \leq 1}, \quad \D^\circ \dfn \set{X \in \lzp \such \sup_{\mu \in \D} \inner{\mu}{X} \leq 1}.
\]
It is straightforward to check that polar sets are convex and solid; furthermore, Fatou's lemma implies that polar sets are closed. The polar of $\set{0} \subseteq \lzp$ is $\Mp$ and the polar of $\set{0} \subseteq \Mp$ is $\lzp$.  Furthermore, it is straightforward to check by the definition of polarity that $\pare{\bigcup_{\nin} \C_n}^\circ = \bigcap_{\nin} \C_n^\circ$ holds for any sequence $(\C_n)_{\nin}$ of subsets of $\lzp$; similarly, $\pare{\bigcup_{\nin} \D_n}^\circ = \bigcap_{\nin} \D_n^\circ$ holds for any sequence $(\D_n)_{\nin}$ of subsets of $\Mp$.

Define also the \textsl{bipolar} of $\C \subseteq \lzp$ or $\D \subseteq \Mp$ via $\C^{\circ \circ} \dfn (\C^\circ)^\circ$ and $\D^{\circ \circ} \dfn (\D^\circ)^\circ$. It is straightforward to check that $\C \subseteq \C^{\circ \circ}$ and $\D \subseteq \D^{\circ \circ}$. Bipolar sets in either $\lzp$ or $\Mp$ are convex, solid and closed. In fact, the version of the bipolar theorem in $\lz$ \cite[Theorem 1.3]{MR1768009} implies that $\C^{\circ \circ}$ is the smallest convex, solid and closed subset of $\lzp$ which contains $\C \subseteq \lzp$; similarly, $\D^{\circ \circ}$ is the smallest convex, solid and closed subset of $\Mp$ which contains $\D \subseteq \Mp$. In particular, if $\C \subseteq \lzp$ (or $\D \subseteq \Mp$) is already convex and solid, then since $\oC$ (or $\oD$) is also convex and solid (see discussion in the end of Section \ref{sec: setup}), it follows that $\C^{\circ \circ} = \oC$ (or $\D^{\circ \circ} = \oD$).

\subsection{Proof of implication $(1) \Rightarrow (2)$ of Theorem \ref{thm: UI}}

In order to ease the reading, the sets $\On$ of \eqref{eq: X_opt} will be denoted here by $\K_n$ for all $\nin$; similarly, we shall write $\oK_n$ in place of $\oOn$, for all $\nin$.
We shall assume in the sequel that $\bigcap_{\nin} \K_n = \set{0}$. Recall that $\K_n$ is a solid subset of $\lzp$ for all $\nin$. It is then a consequence of the next result that the condition $\bigcap_{\nin} \K_n = \set{0}$ actually implies the seemingly stronger $\bigcap_{\nin} \oK_n = \set{0}$.

\begin{lem} \label{lem: solid_closure_zero}
Let $(\So_n)_{\nin}$ be a sequence of solid subsets of $\lzp$. Then, $\bigcap_{\nin} \So_n = \emptyset$ holds if and only if $\bigcap_{\nin} \oS_n = \emptyset$.
\end{lem}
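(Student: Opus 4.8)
The plan is to collapse both sides of the claimed equivalence to the single, purely combinatorial statement ``$\So_{n_0} = \emptyset$ for some $n_0 \in \Natural$.'' Everything hinges on one elementary observation about solidity: \emph{every non-empty solid subset of $\lzp$ contains $0$}. Indeed, if $\So \subseteq \lzp$ is solid and $X \in \So$, then $0 \in \lzp$ satisfies $0 \leq X$, so solidity forces $0 \in \So$. I would isolate this remark at the outset, since it is the only substantive ingredient.

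First I would dispose of the easy implication. As $\So_n \subseteq \overline{\So_n}$ for every $\nin$, we have $\bigcap_{\nin} \So_n \subseteq \bigcap_{\nin} \overline{\So_n}$, so $\bigcap_{\nin} \overline{\So_n} = \emptyset$ immediately yields $\bigcap_{\nin} \So_n = \emptyset$. This direction needs nothing beyond monotonicity of intersection under inclusion.

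For the converse I would argue by contraposition, combining the zero-element observation with the triviality that the closure of a non-empty set is non-empty. Assume $\bigcap_{\nin} \So_n = \emptyset$ and suppose, for contradiction, that every $\So_n$ is non-empty. Each $\So_n$ is then a non-empty solid set, hence contains $0$ by the observation above, and therefore $0 \in \bigcap_{\nin} \So_n$, contradicting emptiness. Consequently $\So_{n_0} = \emptyset$ for some $n_0 \in \Natural$, whence $\overline{\So_{n_0}} = \emptyset$, and so $\bigcap_{\nin} \overline{\So_n} \subseteq \overline{\So_{n_0}} = \emptyset$.

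There is essentially no genuine obstacle here: the only point requiring care is noticing that solidity of a non-empty set automatically places $0$ in it, which is precisely what reduces \emph{both} intersections to the condition that one of the $\So_n$ be empty. The apparent asymmetry between a set and its closure plays no role, since passing to the closure neither creates nor destroys emptiness. The substance of the lemma is therefore entirely in recognizing this collapse; its usefulness lies not in its own depth but in how it is later applied (via a suitable solid auxiliary family) to upgrade $\bigcap_{\nin} \K_n = \set{0}$ to $\bigcap_{\nin} \oK_n = \set{0}$.
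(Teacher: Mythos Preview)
Your argument is correct for the lemma exactly as stated, and indeed trivializes it: if some $\So_{n_0}$ is empty, both intersections are empty; otherwise $0$ lies in every $\So_n$ by solidity and both intersections contain $0$. However, the statement almost certainly carries a typo: $\emptyset$ should read $\set{0}$. Two pieces of evidence make this clear. First, the paper invokes the lemma precisely to pass from $\bigcap_n \K_n = \set{0}$ to $\bigcap_n \oK_n = \set{0}$; since each $\K_n$ is solid and nonempty, neither intersection is ever empty, so the $\emptyset$-version is vacuous in that application. Second, the paper's own proof begins the nontrivial direction by choosing $Z \in \bigcap_n \oS_n$ with $\prob[Z>0]>0$ (only justified if one is assuming the intersection strictly contains $\set{0}$) and ends by producing $Y \in \bigcap_n \So_n$ with $\prob[Y>0]>0$; this is exactly the contrapositive of the $\set{0}$-version.

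Your reduction does not survive the correction. Knowing $\bigcap_n \So_n = \set{0}$ does not force any individual $\So_n$ to equal $\set{0}$, so the collapse to ``some term is trivial'' fails; and the ``suitable solid auxiliary family'' you allude to for recovering the $\set{0}$-statement from the $\emptyset$-statement is not supplied, nor is one apparent---removing $0$ from the $\K_n$ destroys solidity. The paper's argument is genuinely different: given a nonzero $Z \in \bigcap_n \oS_n$, it picks $Z_n \in \So_n$ with $\prob \bra{|Z_n - Z| > Z/2 \such Z > 0} < 2^{-(n+1)}$, sets $A = \bigcap_n \set{|Z_n - Z| \leq Z/2}$ (which keeps positive probability on $\set{Z>0}$ by a union bound), and uses solidity to place the nonzero element $Y = (Z/2)\indic_A \leq Z_n$ in every $\So_n$. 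That approximation-and-truncation step is the real content, and your proposal does not supply it.
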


\begin{proof}
Of course, $\bigcap_{\nin} \oS_n = \emptyset$ always implies $\bigcap_{\nin} \So_n = \emptyset$. Conversely, assume that $\bigcap_{\nin} \oS_n \supsetneq \emptyset$; we shall then establish that $\bigcap_{\nin} \So_n \supsetneq \emptyset$. Let $Z \in \bigcap_{\nin} \oS_n$ be such that $\prob \bra{Z > 0} > 0$. For all $\nin$, since $Z \in \oS_n$, let $Z_n \in \So_n$ be such that $\prob \bra{|Z_n - Z| >  Z /2 \such Z > 0} < 2^{- \pare{n + 1}}$. Let $A \dfn \bigcap_{\nin} \set{|Z_n - Z| \leq  Z /2}$. Since $\prob \bra{Z > 0} > 0$ and
\[
\prob[A \such Z > 0] \geq 1 - \sum_{\nin} \prob[|Z_n - Z| >  Z /2 \such Z > 0] \geq 1 - \frac{1}{2} = \frac{1}{2} > 0,
\]
it follows that $\prob \bra{A \cap \set{Z > 0}} > 0$; therefore, upon defining $Y \dfn (Z/2) \indic_A \in \lzp$, note that $\prob \bra{Y > 0} > 0$. Furthermore, $Y \leq Z_n$ follows from the fact that $|Z_n - Z| \leq  Z /2$ holds on $A$ for all $\nin$, which implies that $Y \in \So_n$ in view of the solidity of $\So_n$ for all $\nin$. It follows that $Y \in \bigcap_{\nin} \So_n$, which completes the proof.
\end{proof}

For all $\nin$, associated with $\K_n$ define a function $u_n : \Mp \mapsto [0, \infty]$ via
\begin{equation} \label{eq: kappa_n}
u_n (\mu) \dfn \sup_{X \in \K_n} \inner{\mu}{X} = \sup_{X \in \oK_n} \inner{\mu}{X}, \quad \mu \in \Mp,
\end{equation}
where the second equality follows from Fatou's lemma. Each $u_n$, $\nin$, has the following properties:
\begin{itemize}
	\item \emph{monotonicity}: $u_n(\mu) \leq u_n(\nu)$ holds for all $\mu \in \Mp$ and $\nu \in \Mp$ with $\mu \leq \nu$.
	\item \emph{sub-additivity}: $u_n(\mu + \nu) \leq u_n(\mu) + u_n(\nu)$ holds for all $\mu \in \Mp$ and $\nu \in \Mp$.
	\item \emph{positive homogeneity}: $u_n (a \mu) = a u_n(\mu)$ holds for all $\mu \in \Mp$ and $a \in (0, \infty)$.
	\item \emph{continuity from below}: for any non-decreasing $\Mp$-valued sequence $(\mu_k)_{\kin}$ such that $\mu \dfn \limk \mu_k = \bigvee_{\kin} \mu_k$ is an element of $\Mp$, it holds that $u_n (\mu) = \limk u_n(\mu_k)$. 
\end{itemize}
The first three properties are immediate from the definition of $u_n$, $\nin$, while the last property follows from the monotone convergence theorem.

The following intermediate result will be helpful.

\begin{lem} \label{lem: bddness}
With the above notation, $\bigcap_{\nin} \K_n = \set{0}$ implies that there exists $\mu_0 \in \Mp$ with $\mu_0 \sim \prob$ such that $u_n(\mu_0) \leq 1$ holds for all $\nin$.
\end{lem}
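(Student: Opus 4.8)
The plan is to reformulate the conclusion through polar sets and then to build $\mu_0$ by patching together measures supplied by Lemma \ref{lem: unbdd}. Note that $u_n(\mu)\le 1$ for every $\nin$ says precisely that $\mu\in\bigcap_{\nin}\K_n^\circ$, and since $\K_n\subseteq\K_1$ gives $u_n\le u_1$, it suffices to produce $\mu_0\sim\prob$ with $u_1(\mu_0)\le 1$. First I would set $\D\dfn\bigcup_{\nin}\K_n^\circ\subseteq\Mp$. As $(\K_n)_{\nin}$ is non-increasing, $(\K_n^\circ)_{\nin}$ is non-decreasing, so $\D$ is an increasing union of convex and solid sets and is therefore itself convex and solid. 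By the polar-of-a-union identity and the fact that $\K_n^{\circ\circ}=\oK_n$ for each $\nin$ (every $\K_n$ being convex and solid), one has $\D^\circ=\bigcap_{\nin}\K_n^{\circ\circ}=\bigcap_{\nin}\oK_n$; and $\bigcap_{\nin}\oK_n=\set{0}$ by the hypothesis $\bigcap_{\nin}\K_n=\set{0}$ and Lemma \ref{lem: solid_closure_zero}. Hence $\D^\circ=\set{0}$ and so $\oD=\D^{\circ\circ}=\set{0}^\circ=\Mp$.

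Next I would apply Lemma \ref{lem: unbdd} to $\D$: for each $\kin$ there is $\nu_k\in\Mp$ with $\prob\bra{\ud\nu_k/\ud\prob=0}<2^{-k}$ and $a\nu_k\in\D$ for all $a\in\Real_+$; in particular $\nu_k\in\D=\bigcup_{\nin}\K_n^\circ$, so $\nu_k\in\K_{p_k}^\circ$ --- that is, $u_{p_k}(\nu_k)\le 1$ --- for some $p_k\in\Natural$. Replacing $\nu_k$ by the measure with density $(\ud\nu_k/\ud\prob)\wedge 1$ only shrinks it, hence keeps it in the solid set $\D$ and preserves $u_{p_k}(\nu_k)\le 1$ by monotonicity of $u_{p_k}$, while leaving the event $\set{\ud\nu_k/\ud\prob=0}$ unchanged; so I may assume $\nu_k[\Omega]\le 1$. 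From the elementary pointwise bound $(x-1)_+\le(p-1)+(x-p)_+$, valid for $x\in\Real_+$ and integers $p\ge 1$ (check $x\le 1$, $1\le x\le p$ and $x\ge p$ separately), one gets $(X-1)_+\le(p_k-1)+(X-p_k)_+$ for every $X\in\X$; pairing against $\nu_k$, taking the supremum over $X\in\X$, and using that $u_m(\nu)=\sup_{X\in\X}\inner{\nu}{(X-m)_+}$ for $\nu\in\Mp$ (immediate from \eqref{eq: kappa_n}, the positivity and linearity of $\inner{\nu}{\cdot}$, and the definition of $\K_m$ as the solid hull of the convex hull of $\set{(X-m)_+\such X\in\X}$), one obtains $u_1(\nu_k)\le(p_k-1)\,\nu_k[\Omega]+u_{p_k}(\nu_k)\le p_k$.

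Finally I would set $\mu_0\dfn\sum_{\kin}\frac{1}{2^k(p_k+1)}\,\nu_k$. Then $\mu_0[\Omega]\le\sum_{\kin}2^{-k}=1$, so $\mu_0\in\Mp$; and $\prob\bra{\ud\mu_0/\ud\prob=0}\le\prob\bra{\bigcap_{\kin}\set{\ud\nu_k/\ud\prob=0}}\le\inf_{\kin}2^{-k}=0$, so $\mu_0\sim\prob$. Using sub-additivity and continuity from below of $u_1$ (equivalently, Tonelli's theorem on the defining series),
\[
u_1(\mu_0)\ \le\ \sum_{\kin}\frac{u_1(\nu_k)}{2^k(p_k+1)}\ \le\ \sum_{\kin}\frac{p_k}{2^k(p_k+1)}\ \le\ 1,
\]
and since $u_n\le u_1$ for all $\nin$ this gives $u_n(\mu_0)\le 1$ for every $\nin$, which is exactly the assertion.

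The only genuinely load-bearing step is the first one: the hypothesis $\bigcap_{\nin}\K_n=\set{0}$ is precisely what forces the solid set $\D=\bigcup_{\nin}\K_n^\circ$ to be $\lz$-dense in $\Mp$, which is exactly the hypothesis that Lemma \ref{lem: unbdd} requires. Everything after that is bookkeeping, the one small wrinkle being that the indices $p_k$ returned by Lemma \ref{lem: unbdd} come with no a priori bound; the elementary comparison of $(\cdot-1)_+$ with $(\cdot-p_k)_+$ together with the weights $2^{-k}(p_k+1)^{-1}$ is what turns them into the explicit summable estimate for $u_1(\mu_0)$ above.
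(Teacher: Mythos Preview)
Your proof is correct and takes a genuinely different route from the paper's. The paper first shows that $\oK_1$ is bounded: it invokes the Brannath--Schachermayer decomposition \cite[Lemma 2.3]{MR1768009} to obtain sets $B_n$ on which $\indic_{B_n}\oK_n$ is bounded and on whose complements $\oK_n$ contains $\indic_{\Omega\setminus B_n}\lzp$, observes that all the $B_n$ coincide, and concludes $\prob[B_1]=1$ from $\bigcap_n\oK_n=\set{0}$. It then applies the num\'eraire-type result \cite[Theorem 1.1(4)]{Kar10a} to the bounded set $\oK_1$ to produce $Z\in\oK_1$ with $\sup_{Y\in\oK_1}\expecp[Y/(1+Z)]\le 1$, and sets $\ud\mu_0/\ud\prob=1/(1+Z)$. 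Your argument instead stays entirely within the paper's internal machinery: you compute $\oD=\Mp$ for $\D=\bigcup_n\K_n^\circ$ via bipolarity and Lemma \ref{lem: solid_closure_zero}, apply Lemma \ref{lem: unbdd}, and then tame the uncontrolled indices $p_k$ through the elementary pointwise comparison $(x-1)_+\le(p_k-1)+(x-p_k)_+$ combined with the summable weights $2^{-k}(p_k+1)^{-1}$. What this buys you is self-containment---no appeal to either external reference---and indeed your construction foreshadows the summation technique the paper deploys \emph{after} Lemma \ref{lem: bddness} in the main argument. What the paper's approach buys is the stronger intermediate conclusion that $\oK_1$ is $\lz$-bounded, together with an explicit closed-form density for $\mu_0$.
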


\begin{proof}
By Lemma \ref{lem: solid_closure_zero}, $\bigcap_{\nin} \K_n = \set{0}$ implies $\bigcap_{\nin} \oK_n = \set{0}$. Note that each $\oK_n$ is convex, solid and closed. According to \cite[Lemma 2.3]{MR1768009}, there exist sets $B_n \in \F$ for all $\nin$ with the property that $\indic_{B_n} \oK_n$ is bounded while $\indic_{\Omega \setminus B_n} \oK_n = \indic_{\Omega \setminus B_n} \lzp$. By definition of the sets $\oK_n$, $\nin$, it is straightforward to check that for any $\nin$ and $Z_{n+1} \in \oK_{n+1}$ there exists $Z_n \in \oK_n$ such that $Z_n \geq (Z_{n+1} - 1)_+$; therefore, it follows that $B_n = B_1$ holds for all $\nin$ (up to null sets). This fact implies that $\bigcap_{\nin} \oK_n \supseteq \indic_{\Omega \setminus B_1} \lzp$; therefore, in view of $\bigcap_{\nin} \oK_n = \set{0}$, $\prob \bra{B_1} = 1$ has to hold. It follows that $\oO_1$ is bounded. An application of \cite[Theorem 1.1(4)]{Kar10a} implies that there exists $Z \in \oO_1$ such that $\sup_{Y \in \oO_1} \expecp \bra{Y / (1 + Z)} \leq 1$. Defining $\mu_0 \sim \prob$  via the recipe $\ud \mu_0 / \ud \prob = 1 / (1 + Z)$, the result follows.
\end{proof}

Along with the non-increasing sequence of mappings $(u_n)_{\nin}$ that was introduced above, define also the ``limiting'' map $u : \Mp \mapsto [0, \infty]$ via
\[
u (\mu) \dfn \, \downarrow \limn u_n(\mu), \quad \mu \in \Mp.
\]
The monotonicity, sub-additivity and positive homogeneity properties of the sequence $(u_n)_{\nin}$ directly transfer to $u$. (Note, however, that continuity from below will fail in general for $u$.)

\begin{rem} \label{rem: unif_int}
By Proposition \ref{prop: UI_opt}, it follows that $u (\qprob) = 0$ for some probability $\qprob \in \Mp$ is equivalent to uniform $\qprob$-integrability of $\X$.
\end{rem}

For all $\nin$, define $\eL_n \dfn \set{\mu \in \Mp \such u_n(\mu) \leq 1}$;
recalling the discussion of polar sets in Subsection \ref{subsec: meas_polars}, note that $\eL_n = \K^\circ_n = \oK^\circ_n$. In particular, $\eL_n$ is a convex, solid and closed subset of $\Mp$ for all $\nin$. Furthermore, from the bipolar theorem in $\lz$ \cite[Theorem 1.3]{MR1768009} and the fact that $\oK_n$ is convex, solid and closed (see discussion after the proof of Proposition \ref{prop: soco} at the end of Section \ref{sec: setup}), it follows that $\eL_n^\circ = \pare{\oK_n}^{\circ \circ} = \oK_n$ for all $\nin$. 

Let $\eL \dfn \bigcup_{\nin} \eL_n$. Note that $\eL$ is a solid and convex subset of $\Mp$. Since $\bigcap_{\nin} \oK_n = \set{0}$, it follows that
\[
\oL = \eL^{\circ \circ} = \pare{\bigcup_{\nin} \eL_n}^{\circ \circ} = \pare{\pare{\bigcup_{\nin} \eL_n}^\circ}^{\circ} = \pare{ \bigcap_{\nin} \eL_n^\circ}^\circ = \pare{\bigcap_{\nin} \oK_n}^\circ = \set{0}^\circ = \Mp.
\]
By Lemma \ref{lem: unbdd}, for each $\kin$ there exists $\mu_k \in \Mp$ with $\prob \bra{\ud{\mu_k} / \ud \prob = 0} < 1/k$ such that $m \mu_k \in \bigcup_{\nin} \eL_n$ holds for all $m \in \Natural$. Fixing $\kin$, for each $m \in \Natural$ pick $n_m = n_m(k) \in \Natural$ such that $m \mu_k \in \eL_{n_m}$; then, $u (\mu_k) \leq u_{n_m} (\mu_k) \leq 1 / m$ would hold for all $\kin$, which implies that $u(\mu_k) = 0$. Recall from Lemma \ref{lem: bddness} that there exists $\mu_0 \sim \prob$ such that $u_n(\mu_0) \leq 1$ holds for all $\nin$. Define a new $\Mp$-valued sequence $(\nu_k)_{\kin}$ such that $\ud \nu_k / \ud \prob = \pare{\ud \mu_k / \ud \prob} \wedge \pare{\ud \mu_0 / \ud \prob}$ holds for all $\kin$; then, it follows in a straightforward way that $\prob \bra{\ud{\nu_k} / \ud \prob = 0} = \prob \bra{\ud{\mu_k} / \ud \prob = 0} < 1/k$, $u(\nu_k) \leq u(\mu_k) = 0$ and $\nu_k \leq \mu_0$ for all $\kin$, where $u_1(\mu_0) \leq 1$.

Define $\nu \dfn \sum_{\kin} 2^{-k} \nu_k$; note that $\nu \leq \mu_0$, so that $\nu \in \Mp$. Since $\prob \bra{\ud{\nu_k} / \ud \prob = 0} < 1/k$ holds for all $\kin$, it is straightforward that $\nu \sim \prob$. Furthermore, $\nu \leq \sum_{k=1}^{m} 2^{-k} \nu_k + 2^{-m} \mu_0$ holds for all $m \in \Natural$; therefore,
\[
u(\nu) \leq u \pare{\sum_{k=1}^{m} 2^{-k} \nu_k} + u \pare{2^{-m} \mu_0}  \leq \sum_{k=1}^{m} 2^{-k} u(\nu_k) + 2^{-m} u_1(\mu_0)  \leq 2^{-m}
\]
holds for all $m \in \Natural$. It follows that $u(\nu) = 0$. Since $\nu \sim \prob$ and $u$ is positively homogeneous and monotone, one can replace $\nu$ with a probability $\qprob \sim \prob$ such that $u(\qprob) = 0$, which concludes the proof of implication $(2) \Rightarrow (3)$ of Theorem \ref{thm: UI} in view of Remark \ref{rem: unif_int}.

\bibliographystyle{amsalpha} 
\bibliography{fcc}
\end{document}